\newtheorem{theorem}{Theorem}[section]    
\newtheorem{lemma}[theorem]{Lemma}        
\newtheorem{definition}[theorem]{Definition}  
\newtheorem{proposition}[theorem]{Proposition} 
\newcommand{\R}{{\mathbb R}}
\journal{Journal de Mathématiques Pures et Appliquées}
\begin{document}

\begin{frontmatter}

\title{Critical threshold for a two-species chemotaxis system with the energy critical exponent}

\author{Shen Bian \corref{cor1}}
\cortext[cor1]{Corresponding author. E-mail: bianshen66@163.com} 

\affiliation{organization={Department of Mathematics, Beijing University of Chemical Technology},
            addressline={No. 15 East North Third Ring Road, Chaoyang District},
            city={Beijing},
            postcode={100029},
            country={China}}

\begin{abstract}
We consider a two-species chemotaxis model in $\R^d(d \ge 3)$ featuring nonlinear porous medium-type diffusion and nonlocal attractive power-law interaction. Here, the nonlinear diffusion is chosen to be $1/m_1+1/m_2=(d+2)/d$ in such a way that the associated free energy is conformal invariant, and there are radially symmetric, non-increasing and non-compactly supported stationary solutions $(U_s(x),V_s(x))$. We analyze the conditions on initial data $(u_0,v_0)$ under which attractive forces dominate over diffusion, and further classify the global existence and finite time blow-up of dynamical solutions by virtue of these stationary solutions. Specifically, the solution $(u,v)(x,t)$ exists globally in time if the initial data satisfy $\|u_0\|_{L^{m_1}(\R^d)}<\|U_s\|_{L^{m_1}(\R^d)}$ and $\|v_0\|_{L^{m_2}(\R^d)}<\|V_s\|_{L^{m_2}(\R^d)}$. In contrast, there are blowing-up solutions when $\|u_0\|_{L^{m_1}(\R^d)}>\|U_s\|_{L^{m_1}(\R^d)}$ and $\|v_0\|_{L^{m_2}(\R^d)}>\|V_s\|_{L^{m_2}(\R^d)}$.
\end{abstract}

\begin{keyword}
Degenerate parabolic equations \sep Stationary solutions \sep Global existence \sep Blow-up criterion \sep Free energy
\end{keyword}

\end{frontmatter}

\section{Introduction}
In ecological systems, the interplay between multiple species is often intricately linked to their chemotactic behaviors, where organisms respond to chemical signals in their environment to navigate, interact, or survive. For two-species communities, such dynamics are particularly pronounced: each species may secrete specific signaling molecules, while simultaneously exhibiting directed movement toward the signals produced by themselves or the other species. This coupling of interspecific interactions and chemotactic response forms the core of two-species chemotaxis models. 

In numerous symbiotic and cooperative ecosystems, a key phenomenon emerges: two species mutually secrete chemical signals and exhibit positive directional movement toward one another. For instance, in ecology, pollinating insects are drawn to flowering plants in response to floral scent signals secreted by the plants; meanwhile, the insects release cuticular pheromones that in turn attract more conspecifics or other pollinators. Through bidirectional chemotaxis, coordination between plant pollen dispersal and insect food acquisition is achieved. In oncology, cancer-associated fibroblasts (CAFs) secrete chemokines to attract T cells to aggregate in tumor regions, and activated T cells also secrete signals that reciprocally attract CAFs. This bidirectional chemotactic dynamic influences the balance between tumor immune killing and immune evasion. In microbiology, nitrogen-fixing bacteria in soil secrete organic acids to attract phosphate-solubilizing bacteria, which in turn release amino acids that attract nitrogen-fixing bacteria. By virtue of bidirectional chemotaxis, these two types of bacteria form symbiotic aggregated patches, enabling the efficient completion of nitrogen and phosphorus nutrient cycling in soil.

The core commonalities of these phenomena are as follows: first, the two species each secrete chemical attractants, facilitating mutual directional chemotaxis; second, the diffusion behavior of the species depends on their own density—diffusion weakens at low density to avoid extinction (degenerate diffusion). To quantitatively describe such dynamics, the following system is proposed
\begin{align}\label{uvsystem}
\left\{
  \begin{array}{ll}
    u_t=\Delta u^{m_1}-\nabla \cdot (u \nabla c), & x \in \R^d, t \ge 0, \\
    -\Delta c=v,  & x \in \R^d, t \ge 0, \\
    v_t=\Delta v^{m_2}-\nabla \cdot (v \nabla z), & x \in \R^d, t \ge 0, \\
    -\Delta z=u, & x \in \R^d, t \ge 0, \\
    (u,v)(x,0)=(u_0,v_0)(x) \ge 0, & x \in \R^d.  
  \end{array}
\right.
\end{align}
Here, $u(x,t)$ and $v(x,t)$ represent the densities of two interacting agent types. The diffusion exponents $m_1>1$ and $m_2>1$ determine the diffusion capacity of the agents, which depends on their densities. The bidirectional chemotaxis is captured by the terms $-\nabla \cdot (u\nabla c)$ and $-\nabla \cdot (v \nabla z)$, reflecting mutual attraction. The chemical concentrations $c(x,t)$ and $z(x,t)$, emitted by $v$ and $u$ respectively, are given by the nonlocal expressions
\begin{align}
c=c_d \int_{\R^d} \frac{v(y)}{|x-y|^{d-2}} dy,\quad z=c_d \int_{\R^d} \frac{u(y)}{|x-y|^{d-2}} dy,
\end{align}
where the constant is defined by
\begin{align}
c_d=\frac{1}{d(d-2)\alpha_d} \quad \text{with} \quad \alpha_d = \frac{\pi^{d/2}}{\Gamma(d/2+1)}.
\end{align}
The system \eqref{uvsystem} admits several a priori estimates. Specifically, its solutions remain nonnegative, and mass conservation holds: 
\begin{align}
M_1:=\int_{\R^d} u(x,t) dx=\int_{\R^d} u_0(x) dx,\quad M_2:=\int_{\R^d} v(x,t) dx=\int_{\R^d} v_0(x) dx.
\end{align}
Throughout this paper, the initial data are assumed to satisfy
\begin{align}\label{initialdata}
(u_0,v_0) \in L_+^1 \cap L^\infty(\R^d),\quad \int_{\R^d} (|x|^2 u_0(x)+|x|^2 v_0(x)) dx <\infty.
\end{align}
The competition between diffusion and nonlocal aggregation is represented by the free energy
\begin{align}\label{Fuv}
F(u,v)=\frac{1}{m_1-1} \int_{\R^d} u^{m_1}dx + \frac{1}{m_2-1} \int_{\R^d} v^{m_2}dx-c_d \iint_{\R^d \times \R^d} \frac{u(x)v(y)}{|x-y|^{d-2}}dxdy.
\end{align}
This functional satisfies a dissipation principle:
\begin{align*}
\frac{d F(u,v)}{dt}=-\int_{\R^d} u \left|\frac{m_1}{m_1-1} \nabla u^{m_1-1}-\nabla c\right|^2 dx-\int_{\R^d} v \left|\frac{m_2}{m_2-1} \nabla v^{m_2-1}-\nabla z   \right|^2 dx \le 0.
\end{align*}
Note that system \eqref{uvsystem} possesses two scaling invariances. Indeed, if $u$ and $v$ solve \eqref{uvsystem}, then the scaled functions 
\begin{align}\label{uvscale}
\left\{
  \begin{array}{ll}
   u_\lambda(x,t)=\lambda^{\frac{2m_2}{m_1+m_2-m_1 m_2}} u(\lambda x, \lambda^{\frac{2m_1}{m_1+m_2-m_1 m_2}} t), \\
v_\lambda(x,t)=\lambda^{\frac{2m_1}{m_1+m_2-m_1 m_2}} v(\lambda x, \lambda^{\frac{2m_2}{m_1+m_2-m_1 m_2}} t) 
  \end{array}
\right.
\end{align}
also solve \eqref{uvsystem}. These scalings preserve the $L^{p_1}$ and $L^{p_2}$ norms: 
\begin{align}
\| u_{\lambda} \|_{L^{p_1}(\R^d)}=\|u\|_{L^{p_1}(\R^d)},\quad \| v_{\lambda} \|_{L^{p_2}(\R^d)}=\|v\|_{L^{p_2}(\R^d)},
\end{align}
where
\begin{align}\label{p1p2}
p_1=\frac{(m_1+m_2-m_1 m_2)d }{2 m_2},\quad p_2=\frac{(m_1+m_2-m_1 m_2)d }{2 m_1}.
\end{align}
The above scalings become mass invariant for both $u$ and $v$ if $m_1=m_2=2-2/d$. When $m_1 m_2+2m_1/d=m_1+m_2$, then $p_2=1$ and mass conservation holds for $v$, whereas $p_1=1$ and mass conservation holds for $u$ when $m_1 m_2+2m_2/d=m_1+m_2$. 

It is worth noting that single-species chemotaxis systems have been intensively studied in recent years. For $m_1=m_2>1$ and $d \ge 2$, the global existence and blow-up of dynamic solutions have been analyzed in \cite{BJ09,BCM08,bdp06,JL92,kimyao,bp07,su06,suku06}. Additionally, significant efforts have focused on investigating steady states, as documented in \cite{CCH17,CCH21,CCV15,CHVY19,CHM18,CGH20,DYY22}. For two-species systems, \cite{JK22} established results on global existence and finite-time blow-up for $m_1>1$, $m_2>1$, and $d \ge 3$. Specifically, for $m_1=m_2=2-2/d$, there exists a number $M_c>0$ such that solutions exist globally if $M_1 M_2 <M_c^2$ and blow up in finite time if $\frac{M_1 M_2}{M_1^{2-2/d}+M_2^{2-2/d}}>\frac{M_c^{2/d}}{2}.$ For $m_1 m_2+2m_1/d<m_1+m_2$ and $m_1 m_2+2m_2/d<m_1+m_2$, one can construct large initial data ensuring blow-up in finite time. When $m_1=m_2=1$ and $d=2,$ \cite{HW22} showed that solutions exist globally if $M_1 M_2 <4 \pi (M_1+M_2)$ and blow-up occurs if $M_1 M_2 >4 \pi (M_1+M_2)$. For multi-species populations, \cite{DG22,DG19} proved global existence and infinite-time aggregation in subcritical and critical cases. For multi-species populations in $\R^2$, \cite{DG22,DG19} proved global existence and infinite-time aggregation in subcritical and critical cases. 

Our main goal in this paper is to classify the global existence and finite-time blow-up of solutions based on the initial data. The main tool for analyzing \eqref{uvsystem} is the stationary system, which can be interpreted in the distributional sense as
\begin{align}\label{star0}
\left\{
  \begin{array}{ll}
    \frac{m_1}{m_1-1} U_s^{m_1-1}-C_s=\overline{C}_1, \text{ in } \Omega_1, \\ \
  \frac{m_2}{m_2-1} V_s^{m_2-1}-Z_s=\overline{C}_2, \text{ in } \Omega_2, \\ \
  U_s=0 \text{ in } \R^d\setminus\Omega_1,\quad U_s>0 \text{ in } \Omega_1, \\ \
  V_s=0 \text{ in } \R^d\setminus\Omega_2,\quad V_s>0 \text{ in } \Omega_2, \\ \
 C_s=c_d \int_{\R^d} \frac{V_s(y)}{|x-y|^{d-2}}dy, \quad Z_s=c_d \int_{\R^d} \frac{U_s(y)}{|x-y|^{d-2}}dy.
  \end{array}
\right.
\end{align}
Here, $\overline{C}_1$ and $\overline{C}_2$ are two constant chemical potentials, and $\Omega_1=\{ x\in \R^d\,| U_s>0 \}, \Omega_2=\{ x\in \R^d\,| V_s>0 \}.$ Letting $\phi_1=\frac{m_1-1}{m_1} (C_s+\overline{C}_1)$ and $\phi_2=\frac{m_2-1}{m_2} (Z_s+\overline{C}_2)$, the stationary system reduces to 
\begin{align}\label{star}
\left\{
  \begin{array}{ll}
    -\Delta \phi_1=\frac{m_1-1}{m_1} \phi_2^{\frac{1}{m_2-1}}, &\text{ in } \Omega_2, \\
  -\Delta \phi_2=\frac{m_2-1}{m_2} \phi_1^{\frac{1}{m_1-1}}, &\text{ in } \Omega_1.
  \end{array}
\right.
\end{align}
System \eqref{star} is the well-known Hénon-Lane-Emden system, which has been extensively studied by many authors. The Hénon-Lane-Emden conjecture states that there is no positive classical solution to \eqref{star} in $\Omega_1=\Omega_2=\R^d$ if and only if $1/m_1+1/m_2<(d+2)/d$. This conjecture is known to be true for radial solutions in all dimensions \cite{EM96,SZ98}. For non-radial solutions, the conjecture is only fully answered when $d \le 4$ \cite{EM01,PQ07,SZ96,S09}. Some partial results have also been established for $d \ge 5$ \cite{CLL17,CLZ17,DQ22,JL14}. One can refer to \cite{PS12} for 
\begin{align*}
m_1,m_2 \ge \frac{2d}{d+2} \text{ and } (m_1,m_2) \neq \left( \frac{2d}{d+2},\frac{2d}{d+2} \right),
\end{align*}
and \cite{AY14} for 
\begin{align*}
\frac{2m_1(m_2-1)}{m_1+m_2-m_1m_2},\frac{2m_2(m_1-1)}{m_1+m_2-m_1m_2} \ge \frac{d-2}{2}
\end{align*}
and
\begin{align*}
\left(\frac{2m_1(m_2-1)}{m_1+m_2-m_1m_2},\frac{2m_2(m_1-1)}{m_1+m_2-m_1m_2} \right ) \neq \left(\frac{d-2}{2},\frac{d-2}{2} \right).
\end{align*}
Moreover, both Mitidieri \cite{EM96} and Serrin-Zou \cite{SZ98} constructed positive radial solutions for $1/m_1+1/m_2 \ge (d+2)/d$. 

In this work, we focus on the case $1/m_1+1/m_2=(d+2)/d.$ Several aspects motivate the consideration of this special case:
\begin{itemize}
\item Based on the above analysis, stationary solutions fall into two subcases depending on $1/m_1+1/m_2$. For $1/m_1+1/m_2<(d+2)/d$, all the radial solutions and some non-radial solutions are compactly supported. In contrast, all radial stationary solutions are not compactly supported for $1/m_1+1/m_2 \ge (d+2)/d$. Consequently, $1/m_1+1/m_2=(d+2)/d$ is the critical case separating compactly supported stationary solutions from those that are not.
\item The free energy $F(u,v)$ is invariant scaling when $1/m_1+1/m_2=(d+2)/d$. Indeed, substituting the scaling functions \eqref{uvscale} into the free energy $F(u,v)$, we obtain
    \begin{align}
     F(u_\lambda,v_\lambda)=\lambda^{\frac{(d+2)m_1m_2-d(m_1+m_2)}{m_1+m_2-m_1m_2}}F(u,v),
    \end{align}
    and the free energy is invariant when $1/m_1+1/m_2=(d+2)/d$. For this reason, this case corresponds to the energy-critical curve. We also infer from \eqref{uvscale} that $\|u_\lambda\|_{L^{m_1}(\R^d)}=\|u\|_{L^{m_1}(\R^d)}$ and $\|v_\lambda\|_{L^{m_2}(\R^d)}=\|v\|_{L^{m_2}(\R^d)}$.
\item From the expression of the free energy $F(u,v)$, the energy-critical curve $1/m_1+1/m_2=(d+2)/d$ serves as the equilibrium threshold between the ``tendency of particles toward free diffusion'' and the ``constraint from long-range attraction''. The non-trivial extremal solutions correspond to the stable states with the lowest energy.
\end{itemize}

In fact, for $1/m_1+1/m_2=(d+2)/d$, we can deduce that $\overline{C}_1=\overline{C}_2=0$ and $\Omega_1=\Omega_2=\R^d$ in \eqref{star0} (see Proposition \ref{prop1006}). Therefore, the stationary solution to \eqref{uvsystem} can be summarized as follows.

\begin{theorem}\label{steadymain}
Let $1/m_1+1/m_2=(d+2)/d.$ Assume $(U_s,V_s) \in L^{m_1}(\R^d) \times L^{m_2}(\R^d)$ is a pair of stationary solutions of \eqref{uvsystem}. Then $(U_s,V_s)$ are radially symmetric and monotonic decreasing up to translation, and satisfy
\begin{align}
\left\{
  \begin{array}{ll}
 \frac{m_1}{m_1-1} U_s^{m_1-1}=c_d \int_{\R^d} \frac{V_s(y)}{|x-y|^{d-2}} dy, &  x \in \R^d, \\
\frac{m_2}{m_2-1} V_s^{m_2-1}=c_d \int_{\R^d} \frac{U_s(y)}{|x-y|^{d-2}} dy, &  x \in \R^d.
  \end{array}
\right.
\end{align}
In particular, if $m_1=m_2=2d/(d+2)$, then $U_s=V_s$ and they both assume the form 
\begin{align}
c \left( \frac{\lambda}{\lambda^2+|x-x_0|^2}  \right)^{\frac{d+2}{2}}
\end{align}
with some constant $c=c(d)$, and for some $\lambda>0$ and $x_0 \in \R^d$.
\end{theorem}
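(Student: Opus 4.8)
The plan is to reduce the differential stationary problem to a pure integral system, extract symmetry and monotonicity by the method of moving planes, and identify the extremal profiles in the symmetric case through the classification of solutions to the critical Hardy--Littlewood--Sobolev (HLS) integral equation. The first step is cheap: invoking Proposition \ref{prop1006}, which in the energy-critical regime $1/m_1+1/m_2=(d+2)/d$ forces $\overline{C}_1=\overline{C}_2=0$ and $\Omega_1=\Omega_2=\R^d$, and substituting these into \eqref{star0} immediately yields the two integral identities asserted in the theorem. Thus the displayed system is essentially free once Proposition \ref{prop1006} is in hand, and the substantive content is the symmetry, monotonicity, and explicit form.

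Next I would set $\psi_1=\frac{m_1}{m_1-1}U_s^{m_1-1}$ and $\psi_2=\frac{m_2}{m_2-1}V_s^{m_2-1}$, so that $U_s=(\tfrac{m_1-1}{m_1}\psi_1)^{1/(m_1-1)}$ and $V_s=(\tfrac{m_2-1}{m_2}\psi_2)^{1/(m_2-1)}$, turning the two identities into a coupled HLS-type system
\[
\psi_1(x)=\kappa_1\int_{\R^d}\frac{\psi_2(y)^{1/(m_2-1)}}{|x-y|^{d-2}}\,dy,\qquad \psi_2(x)=\kappa_2\int_{\R^d}\frac{\psi_1(y)^{1/(m_1-1)}}{|x-y|^{d-2}}\,dy,
\]
with positive constants $\kappa_1,\kappa_2$. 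A short computation using $1/m_1+1/m_2=(d+2)/d$ shows that the exponents $p=1/(m_1-1)$, $q=1/(m_2-1)$ satisfy the conformally invariant relation $\frac{1}{p+1}+\frac{1}{q+1}=2-(1/m_1+1/m_2)=\frac{d-2}{d}$; equivalently, since $c_d|x|^{-(d-2)}$ is the Newtonian kernel, applying $-\Delta$ recasts the system as the critical Hénon--Lane--Emden system. Before running moving planes I would establish that $(\psi_1,\psi_2)$ are positive, continuous, and decaying at infinity: starting from $(U_s,V_s)\in L^{m_1}\times L^{m_2}$, a standard HLS/regularity bootstrap on the integral system upgrades integrability to boundedness and provides the decay needed for the integral estimates.

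With these properties in place I would apply the integral form of the method of moving planes (in the spirit of Chen--Li--Ou) to the system, using the critical homogeneity to close the sliding argument and force $(\psi_1,\psi_2)$, hence $(U_s,V_s)$, to be radially symmetric about a common center $x_0$ and strictly decreasing in $|x-x_0|$, which gives the ``up to translation'' conclusion. In the symmetric case $m_1=m_2=2d/(d+2)$ one has $1/(m_i-1)=(d+2)/(d-2)$, the two equations coincide, and a comparison/uniqueness argument for the resulting cooperative radial system forces $\psi_1\equiv\psi_2$, i.e. $U_s=V_s$. The common profile $\psi:=\psi_1$ then solves the scalar critical equation $\psi(x)=\kappa\int_{\R^d}\psi(y)^{(d+2)/(d-2)}|x-y|^{-(d-2)}\,dy$, whose positive solutions are classified (Caffarelli--Gidas--Spruck, Chen--Li--Ou) as the bubbles $\psi(x)=c\,(\lambda/(\lambda^2+|x-x_0|^2))^{(d-2)/2}$. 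Finally, raising to the power $1/(m_1-1)=(d+2)/(d-2)$ through $U_s=(\tfrac{m_1-1}{m_1}\psi)^{1/(m_1-1)}$ converts the exponent $(d-2)/2$ into $(d+2)/2$ and produces the stated form with $c=c(d)$.

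The main obstacle I anticipate is the symmetry step: justifying the moving-plane machinery on the integral system requires the a priori regularity and sharp decay of $(\psi_1,\psi_2)$, and in the critical, non-compactly-supported regime one must control the behavior at infinity carefully to start the planes and to rule out degenerate configurations. A secondary but genuine difficulty is proving $U_s=V_s$ in the symmetric case, since subtracting the two integral equations leaves an integrand of indefinite sign; I expect to overcome this by exploiting the common radial center obtained from the moving-plane step together with a Pohozaev-type uniqueness argument for the radial profile.
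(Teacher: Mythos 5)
Your proposal is correct and follows essentially the same route as the paper: reduce via Proposition \ref{prop1006} to the integral identities on all of $\R^d$ with zero chemical potentials, pass to the potentials (your $\psi_1,\psi_2$ are exactly the paper's $C_s,Z_s$), and obtain radial symmetry, monotonicity, and the bubble profile for $m_1=m_2=2d/(d+2)$ from the classification of the critical integral/H\'enon--Lane--Emden system. The only difference is that the paper outsources the symmetry and classification steps to citations (Chen--Li \cite{CLPDE05}, \cite{CLAA05}), whereas you propose to re-derive them by the moving-plane method for integral equations — the same machinery underlying those references.
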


Therefore, we arrive at
\begin{align}\label{UsVs}
\frac{m_1}{m_1-1} \int_{\R^d} U_s^{m_1} dx&=\int_{\R^d} C_s U_s dx=c_d \iint_{\R^d \times \R^d} \frac{U_s(x) V_s(y)}{|x-y|^{d-2}} dxdy \nonumber\\
&=\int_{\R^d} Z_s V_s dx=\frac{m_2}{m_2-1} \int_{\R^d} V_s^{m_2} dx. 
\end{align}
Furthermore, the free energy of stationary solutions satisfies
\begin{align}\label{FUsVs}
F(U_s,V_s)&=\frac{1}{m_1-1} \int_{\R^d} U_s^{m_1}dx + \frac{1}{m_2-1} \int_{\R^d} V_s^{m_2}dx-c_d \iint_{\R^d \times \R^d} \frac{U_s(x)V_s(y)}{|x-y|^{d-2}}dxdy \nonumber\\
&=\frac{1}{m_1-1} \int_{\R^d} U_s^{m_1}dx-\int_{\R^d} V_s^{m_2}dx \nonumber\\
&=\frac{m_1+m_2-m_1m_2}{(m_1-1)m_2}\int_{\R^d} U_s^{m_1}dx.
\end{align}

Before proceeding, we first state the definition of the weak solution $(u,v)$ that we will deal with throughout this paper.
\begin{definition}
Let $(u_0,v_0)$ be the initial data satisfying \eqref{initialdata} and $T \in (0,\infty].$ A weak solution $(u,v)$ to \eqref{uvsystem} is a pair of non-negative functions $(u,v) \in \left(L^\infty \left(0,T;L_+^1 \cap L^\infty(\R^d) \right) \right)^2$ such that for any $0<t<T$ and all test functions $\psi_1, \psi_2 \in C_0^\infty(\R^d)$, 
 \begin{align*}
 &\int_{\R^d} \psi_1 u(\cdot,t)dx-\int_{\R^d} \psi_1 u_0(x) dx =\int_0^t
 \int_{\R^d} u^{m_1} \Delta \psi_1 dx ds \nonumber \\
 & - \frac{c_{d}(d-2)}{2} \int_0^t \iint_{\R^d\times \R^d}  \frac{[\nabla
 \psi_1(x)-\nabla \psi_1(y)] \cdot (x-y)}{|x-y|^2} \frac{u(x,s)
 v(y,s)}{|x-y|^{d-2}} dxdy ds, \nonumber \\
 &\int_{\R^d} \psi_2 v(\cdot,t)dx-\int_{\R^d} \psi_2 v_0(x) dx =\int_0^t
 \int_{\R^d} v^{m_2} \Delta \psi_2 dx ds \nonumber \\
 & - \frac{c_{d}(d-2)}{2} \int_0^t \iint_{\R^d\times \R^d}  \frac{[\nabla
 \psi_2(x)-\nabla \psi_2(y)] \cdot (x-y)}{|x-y|^2} \frac{v(x,s)
 u(y,s)}{|x-y|^{d-2}} dxdy ds.
 \end{align*}
\end{definition}

We mention that for $1/m_1+1/m_2=(d+2)/d$, the system \eqref{uvsystem} preserves the $L^{m_1}$ norm of $u$ and the $L^{m_2}$ norm of $v$ under the scaling \eqref{uvscale}. Indeed, the $L^{m_1}$ and $L^{m_2}$ norms play a critical role in determining dynamical behaviors of solutions to \eqref{uvsystem}. Our main results can be summarized as follows. 
\begin{theorem}\label{main}
Let $d \ge 3$ and $1/m_1+1/m_2=(d+2)/d.$ Assume that $(U_s,V_s)$ is a pair of stationary solutions of \eqref{uvsystem}. Under assumption \eqref{initialdata}, we also suppose $F(u_0,v_0)<F(U_s,V_s)$.
\begin{enumerate}
  \item[\textbf{(i)}] If $\|u_0\|_{L^{m_1} (\R^d)}<\|U_s\|_{L^{m_1} (\R^d)}$ and $\|v_0\|_{L^{m_2} (\R^d)}<\|V_s\|_{L^{m_2} (\R^d)}$, then there exists a 
     global weak solution $(u,v)$ to \eqref{uvsystem} satisfying that for any $0<t<\infty,$
     \begin{align}
      \|\left(u(\cdot,t),v(\cdot,t)\right)\|_{L^1 \cap L^\infty(\R^d)} \le C\left( \|\left(u_0,v_0\right)\|_{L^1 \cap L^\infty(\R^d)} \right).
     \end{align}
  \item[\textbf{(ii)}] If $\|u_0\|_{L^{m_1} (\R^d)}>\|U_s\|_{L^{m_1} (\R^d)}$ and $\|v_0\|_{L^{m_2} (\R^d)}>\|V_s\|_{L^{m_2} (\R^d)}$, then the weak solution of \eqref{uvsystem} blows up in finite time $T$, in the sense that
     \begin{align}
       \displaystyle \limsup_{t \to T} \left(\|u(\cdot,t)\|_{L^\infty(\R^d)}+\|v(\cdot,t)\|_{L^\infty(\R^d)} \right)=\infty.
     \end{align}
\end{enumerate}
\end{theorem}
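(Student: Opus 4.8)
The plan is to reduce the dynamics to a two‑dimensional phase analysis in the critical norms $a(t)=\|u(\cdot,t)\|_{L^{m_1}(\R^d)}$ and $b(t)=\|v(\cdot,t)\|_{L^{m_2}(\R^d)}$, governed by the energy landscape of an auxiliary function built from the sharp Hardy--Littlewood--Sobolev (HLS) inequality. Since $1/m_1+1/m_2=(d+2)/d$, HLS with exponent $d-2$ reads $c_d\iint\frac{u(x)v(y)}{|x-y|^{d-2}}\,dxdy\le C^*\|u\|_{L^{m_1}}\|v\|_{L^{m_2}}$, and its extremizers are exactly the stationary pairs of Theorem~\ref{steadymain}; hence $C^*a_sb_s=K_s$ with $a_s=\|U_s\|_{L^{m_1}}$, $b_s=\|V_s\|_{L^{m_2}}$ and $K_s=c_d\iint\frac{U_s(x)V_s(y)}{|x-y|^{d-2}}\,dxdy$. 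I would set
\[ g(a,b):=\frac{a^{m_1}}{m_1-1}+\frac{b^{m_2}}{m_2-1}-C^*ab, \]
so that $F(u,v)\ge g(\|u\|_{L^{m_1}},\|v\|_{L^{m_2}})$ for every admissible pair. Using \eqref{UsVs} one checks that $(a_s,b_s)$ is the unique critical point of $g$ in the open quadrant with $g(a_s,b_s)=F(U_s,V_s)$, and that its Hessian has determinant $m_1m_2a_s^{m_1-2}b_s^{m_2-2}\big(1-\tfrac1{(m_1-1)(m_2-1)}\big)$, which is negative because the critical relation forces $(m_1-1)(m_2-1)<1$. Thus $(a_s,b_s)$ is a saddle of $g$, encoding geometrically the separating role of $(U_s,V_s)$. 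Along any weak solution, energy dissipation gives $g(a(t),b(t))\le F(u,v)\le F(u_0,v_0)<g(a_s,b_s)$.

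For part (i) the key is a trapping argument. On the edge $\{a=a_s,\ 0\le b\le b_s\}$ one has $\partial_b g(a_s,b)\le0$, whence $g(a_s,b)\ge g(a_s,b_s)$, and symmetrically on $\{b=b_s,\ 0\le a\le a_s\}$. Since $(a(t),b(t))$ is continuous, starts in the open rectangle $\{a<a_s,\ b<b_s\}$, and obeys $g(a(t),b(t))\le F(u_0,v_0)<g(a_s,b_s)$, it can never reach these edges; hence $\|u(\cdot,t)\|_{L^{m_1}}<a_s$ and $\|v(\cdot,t)\|_{L^{m_2}}<b_s$ for all times of existence. This a priori control of the critical norms, together with mass conservation and the sharp HLS bound on the interaction, leaves a fixed gap between aggregation and diffusion; a standard Moser/Alikakos iteration for the degenerate system then upgrades it to a uniform $L^1\cap L^\infty$ bound, and the local blow‑up criterion yields global existence.

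For part (ii) I would use the virial $I(t):=\int_{\R^d}|x|^2(u+v)\,dx$. Testing the equations against $|x|^2$, integrating by parts, and symmetrizing the interaction via $x\cdot(x-y)+y\cdot(y-x)=|x-y|^2$ gives
\[ \frac{d}{dt}I(t)=2d\Big(\int u^{m_1}+\int v^{m_2}\Big)-2(d-2)\,c_d\iint\frac{u(x)v(y)}{|x-y|^{d-2}}\,dxdy=:2\big[d(A+B)-(d-2)K\big]. \]
The same trapping argument on the opposite side shows $(a(t),b(t))$ stays in $Q_+=\{a>a_s,\ b>b_s\}$ throughout. Inserting $K=\frac{A}{m_1-1}+\frac{B}{m_2-1}-F\ge\frac{A}{m_1-1}+\frac{B}{m_2-1}-F(u_0,v_0)$ and the stationary virial identity $d(A_s+B_s)=(d-2)K_s$ (equivalently $c_1A_s+c_2B_s+(d-2)F(U_s,V_s)=0$ with $c_i=d-\tfrac{d-2}{m_i-1}$) reduces the sign of $\frac{d}{dt}I$ to the inequality $c_1(A-A_s)+c_2(B-B_s)\le-\delta<0$. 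Once this holds, $\frac{d}{dt}I\le 2(d-2)\big(F(u_0,v_0)-F(U_s,V_s)\big)<0$, so $I$ vanishes in finite time while $I\ge0$; this contradiction with global regularity forces finite‑time blow‑up.

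The main obstacle is precisely the inequality $c_1(A-A_s)+c_2(B-B_s)\le-\delta$. When both $c_i<0$ (which occurs, e.g., in the symmetric case $m_1=m_2=2d/(d+2)$, where $m_i<2-2/d$) it is immediate from $A>A_s$, $B>B_s$ by monotonicity. The delicate situation is an unbalanced pair with, say, $c_1>0$: then a large $A$ is admissible only together with a large $B$, and one must genuinely exploit the constraint $g(a,b)\le F(u_0,v_0)$ together with the comparison $\tfrac{m_2}{m_2-1}>m_1$ (again a consequence of $m_1m_2<m_1+m_2$) to show that the negative $c_2B$ contribution dominates. I expect the cleanest route is to maximize $c_1A+c_2B$ over $Q_+\cap\{g\le F(u_0,v_0)\}$ and verify, via the Lagrange conditions on the level set $g=F(u_0,v_0)$, that the supremum does not exceed $-(d-2)F(U_s,V_s)$, which lies strictly below $-(d-2)F(u_0,v_0)$ and thereby supplies the required $\delta$.
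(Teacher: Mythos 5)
Your proposal is correct in the regime the paper actually treats, and it follows essentially the same strategy: the HLS-based auxiliary function $g$ of the two critical norms, free-energy dissipation plus a trapping argument in the $(\|u\|_{m_1},\|v\|_{m_2})$-plane (the paper's Proposition \ref{belowabove}), an $L^p$-to-$L^\infty$ iteration for global existence, and the second-moment/Pohozaev computation for blow-up. Three differences are worth recording. First, your edge-barrier argument for trapping --- $\partial_b g(a_s,b)\le 0$ on $\{a=a_s,\ 0\le b\le b_s\}$, hence $g\ge g(a_s,b_s)$ there, plus continuity of the trajectory --- is actually more solid than the paper's own justification, which asserts that $g$ is monotone in each variable on the quadrants; that assertion is not literally true (for instance $\partial_x g\to \frac{1}{m_1-1}>0$ as $x\to\infty$ with $y=y_*$ fixed), and your saddle-point/level-set picture is the correct way to make Proposition \ref{belowabove} rigorous. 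Second, the ``main obstacle'' you flag in part (ii) --- the unbalanced case $c_1=d-\frac{d-2}{m_1-1}>0$ --- never arises in the paper's proof: Lemma \ref{ueps} and Propositions \ref{blowup}, \ref{globalexistence} all impose $1<m_1,m_2<2-2/d$, which is precisely the condition that both $c_i<0$, so the paper closes the virial argument by monotonicity alone, exactly as in your ``easy'' case; your incomplete Lagrange-multiplier sketch addresses a range of exponents that Theorem \ref{main} as stated appears to cover but the paper's proof does not, so leaving it open is not a gap relative to the paper (though you have correctly detected that the theorem's hypotheses are broader than its proof). Third, the step you compress into ``standard Moser/Alikakos iteration'' is where the paper spends most of Section \ref{sub42}: the coupled terms $\int u^p v\,dx$ and $\int v^q u\,dx$ must be estimated by Sobolev interpolation against $\|u\|_{\overline{m}_1},\|v\|_{\overline{m}_2}$ with $\overline{m}_1<m_1$, $\overline{m}_2<m_2$ strictly, these subcritical norms in turn being bounded by interpolating the conserved masses with the trapped critical norms; this is exactly the role your appeal to mass conservation must play, but it is a genuine two-species bookkeeping argument rather than an off-the-shelf iteration, and it is the thinnest point of your proposal.
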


The remainder of this paper is organized as follows. Section \ref{sec2} derives an existence criterion that characterizes the maximal existence time of solutions to \eqref{uvsystem}. Section \ref{sec3} explores the stationary solutions, which satisfy a variational structure related to the Hardy-Littlewood-Sobolev (HLS) inequality. Section \ref{sec4} makes use of the extremal functions of the HLS inequality to prove the main theorem concerning the dichotomy: global existence for small initial data and finite-time blow-up for large initial data. Finally, Section \ref{sec5} concludes the paper.

In what follows, we denote by $C$ a generic constant (which may vary between lines) and by $C=C(\cdot,\cdots,\cdot)$ a constant depending only on the quantities appearing in parentheses. We also use the simplified notations:
\begin{align*}
\|\cdot\|_{r}:=\|\cdot\|_{L^r(\R^d)}, 1 \le r <\infty, \quad \int \cdot \,dx:=\int_{\R^d} \cdot \, dx,\quad \iint \cdot \,dx:=\iint_{\R^d\times \R^d} \cdot \, dx
\end{align*}

\section{Preliminaries} \label{sec2}

In this section, we first recall the Hardy-Littlewood-Sobolev inequality (HLS inequality) \cite{lieb202}, which will be used to analyze the dynamics of solutions to \eqref{uvsystem}.
\begin{lemma}[HLS inequality]
Let $q,r>1$, $d \ge 3$ and $0<\beta<d$ with $1/q+1/r+\beta/d=2.$ Assume $f \in L^q(\R^d)$ and $h \in L^{r}(\R^d)$. Then there exists a sharp constant $C(d,\beta,q)$, independent of $f$ and $h$, such that     
\begin{align}\label{fh}
\left| \iint \frac{f(x)h(y)}{|x-y|^{\beta}} dxdy   \right| \le C(d,\beta,q) \|f\|_q \|h\|_r.
\end{align}
The sharp constant satisfies
\begin{align}\label{cdbeta}
C(d,\beta,q) \le \frac{d}{d-\beta} \left( \frac{\pi^{d/2}}{\Gamma(d/2+1)} \right)^{\beta/d} \frac{1}{qr}\left( \left(\frac{\beta/d}{1-1/q}\right)^{\beta/d}+\left( \frac{\beta/d}{1-1/r} \right)^{\beta/d} \right).
\end{align}
If $q=r=2d/(2d-\beta),$ then
\begin{align}
C(d,\beta,q)=C(d,\beta)=\pi^{\beta/2} \frac{\Gamma\left( d/2-\beta/2 \right)}{\Gamma\left( d-\beta/2 \right)} \left( \frac{\Gamma(d/2)}{\Gamma(d)} \right)^{-1+\beta/d}.
\end{align}
In this case, equality holds in \eqref{fh} if and only if $h$ is a constant multiple of $f$ and
\begin{align}
f(x)=B \left( \gamma^2+|x-x_0|^2 \right)^{-(2d-\beta)/2}
\end{align}
for some $B>0$, $\gamma \in \R$ and $x_0 \in \R^d.$
\end{lemma}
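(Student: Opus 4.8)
The plan is to prove the statement in three stages of increasing difficulty: the qualitative bound \eqref{fh}, the explicit upper bound \eqref{cdbeta}, and finally the sharp diagonal constant together with the rigidity (equality) statement. Throughout I would first reduce to $f,h\ge 0$, since replacing $f,h$ by $|f|,|h|$ only increases the left-hand side while leaving the norms unchanged.

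For the bound \eqref{fh} itself, the cleanest route is to view the double integral as a convolution paired against $h$ and invoke the weak Young inequality: the Riesz kernel $|z|^{-\beta}$ belongs to the Lorentz space $L^{d/\beta,\infty}(\R^d)$, and the constraint $1/q+1/r+\beta/d=2$ is exactly the exponent relation $1/q+\beta/d=1+1/r'$ needed for convolution with a weak-$L^{d/\beta}$ function to map $L^q\to L^{r'}$. Pairing $|\cdot|^{-\beta}*f$ against $h\in L^r$ by H\"older's inequality then yields \eqref{fh} with some finite constant.

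To obtain the \emph{explicit} constant \eqref{cdbeta}, I would instead argue by hand, splitting the kernel at a free radius $R>0$ as $|z|^{-\beta}=|z|^{-\beta}\mathbf{1}_{\{|z|\le R\}}+|z|^{-\beta}\mathbf{1}_{\{|z|>R\}}$. The inner piece lies in $L^a$ for every $a<d/\beta$ and the outer piece in $L^b$ for every $b>d/\beta$; applying the ordinary Young inequality to each piece and using the elementary identity $\int_{\{|z|\le R\}}|z|^{-\beta}\,dz=\frac{d\alpha_d}{d-\beta}R^{d-\beta}$ (with $\alpha_d=\pi^{d/2}/\Gamma(d/2+1)$ the volume of the unit ball) produces the prefactor $\frac{d}{d-\beta}\alpha_d^{\beta/d}$. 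Optimizing the resulting bound over $R$, equivalently balancing the two pieces, yields the symmetric combination $\frac{1}{qr}\big((\tfrac{\beta/d}{1-1/q})^{\beta/d}+(\tfrac{\beta/d}{1-1/r})^{\beta/d}\big)$ appearing in \eqref{cdbeta}; the two summands correspond to the two ways of distributing the exponents between $f$ and $h$.

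The genuinely hard part is the sharp constant in the conformal case $q=r=2d/(2d-\beta)$ and the classification of extremals. Here I would first apply the Riesz rearrangement inequality to reduce to radially symmetric, non-increasing $f,h$, noting that symmetric decreasing rearrangement preserves the $L^q$ norms and only increases the functional. The decisive structural fact is that in this case the functional is invariant under the full conformal group of $\R^d\cup\{\infty\}$; transporting the problem to the sphere $S^d$ via stereographic projection turns $|x-y|^{-\beta}$ into a constant multiple of the chordal-distance kernel, and the norms into $L^{2d/(2d-\beta)}(S^d)$ norms. On $S^d$ one shows that the maximizer must be the constant function, for instance by the method of competing symmetries: alternating a conformal transformation with spherical symmetrization drives any admissible profile toward a rotation-invariant, hence constant, limit. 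Pulling the constant back through stereographic projection gives precisely $f(x)=B(\gamma^2+|x-x_0|^2)^{-(2d-\beta)/2}$ with $h$ a multiple of $f$, while the strictness of the rearrangement and symmetrization steps forces this to be the only equality case. The main obstacle throughout is controlling the loss of compactness in the maximization, which the conformal symmetry both creates and, via the passage to the compact sphere, ultimately resolves; evaluating the functional at the extremal then produces the closed form for $C(d,\beta)$ in terms of Gamma functions.
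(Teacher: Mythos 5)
First, a point of comparison: the paper does not prove this lemma at all --- it is quoted as a classical result, with \eqref{fh}--\eqref{cdbeta} taken from Lieb--Loss \cite{lieb202} (Theorem 4.3) and the sharp diagonal constant and classification of extremals from Lieb \cite{lieb83}. So your proposal can only be judged against those sources. Your first step (weak Young / Lorentz-space convolution plus H\"older) is a correct and standard way to get the qualitative bound \eqref{fh}, and your third step (reduction to radial non-increasing profiles by Riesz rearrangement, stereographic projection to $S^d$, and competing symmetries to show the spherical maximizer is constant) is precisely the Carlen--Loss route to Lieb's theorem; as a road map it is right, though it compresses the genuinely hard points --- existence of a maximizer despite the conformal loss of compactness, and the \emph{strict} rearrangement inequality needed to make the ``only if'' part of the equality statement rigorous.

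The concrete gap is in your middle step, the derivation of the explicit constant \eqref{cdbeta}. The plan ``split $|z|^{-\beta}$ at a radius $R$ and apply ordinary Young to each piece'' does not typecheck: to bound the trilinear form by $\|f\|_q\|h\|_r$, Young's inequality forces the kernel exponent $s$ to satisfy $1/q+1/s+1/r=2$, i.e.\ $s=d/\beta$ exactly, and \emph{both} truncated pieces have infinite $L^{d/\beta}$ norm (the near piece diverges logarithmically at the origin, the far piece at infinity --- this is the whole reason HLS is not a corollary of Young). If instead you take $a<d/\beta$ for the near piece and $b>d/\beta$ for the far piece, as you propose, the conjugate exponents shift: the near piece is then controlled by $\|f\|_q\|h\|_{r_1}$ with $r_1>r$ and the far piece by $\|f\|_q\|h\|_{r_2}$ with $r_2<r$, norms you do not control under the sole hypothesis $h\in L^r$. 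The bound \eqref{cdbeta} in \cite{lieb202} is obtained by a genuinely different mechanism: write $f$, $h$ \emph{and} the kernel in layer-cake form, reduce to products of characteristic functions, and optimize the radial cutoff slice-by-slice (a bathtub-type argument) --- the splitting radius there depends on the level-set measures, not on a single global $R$. Alternatively one can salvage a pointwise splitting via Hedberg's maximal-function trick, but that yields a constant involving the Hardy--Littlewood maximal bound, not the clean expression \eqref{cdbeta}. As written, your second stage therefore proves an HLS-type bound but not the stated constant.
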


Now, we consider the regularized problem 
\begin{align}\label{uveps}
\left\{
  \begin{array}{ll}
    \partial_t u_\varepsilon=\Delta u_\varepsilon^{m_1}-\nabla \cdot \left( u_\varepsilon \nabla c_\varepsilon \right),\quad & x \in \R^d,~t \ge 0, \\
-\Delta c_\varepsilon=J_\varepsilon \ast v_\varepsilon,\quad & x \in \R^d,~t \ge 0, \\ 
 \partial_t v_\varepsilon=\Delta v_\varepsilon^{m_2}-\nabla \cdot \left( v_\varepsilon \nabla z_\varepsilon \right),\quad & x \in \R^d,~t\ge 0, \\
-\Delta z_\varepsilon=J_\varepsilon \ast u_\varepsilon,\quad & x \in \R^d,~t \ge 0, \\ 
 (u_\varepsilon,v_\varepsilon)(x,0)=(u_{0\varepsilon},v_{0\varepsilon}) \ge 0,\quad & x\in \R^d,
  \end{array}
\right.
\end{align}
where $J_\varepsilon(x)=\frac{1}{\varepsilon
^{d}}J(\frac{x}{\varepsilon})$, $J(x)=\frac{1}{\alpha_d}
\left(1+|x|^2\right)^{-(d+2)/2}$ and $\int J_\varepsilon(x)
dx=1.$ A straightforward computation shows that $c_\varepsilon$ and $z_\varepsilon$ can be expressed as
\begin{align}
c_\varepsilon=R_\varepsilon \ast v_\varepsilon,\quad z_\varepsilon=R_\varepsilon \ast u_\varepsilon
\end{align}
with the regularized Riesz potential defined by
\begin{align}
R_\varepsilon(x)=c_{d} \frac{1}{\left( |x|^2+\varepsilon^2  \right)^{\frac{d-2}{2}}}.
\end{align}
Here, $(u_{0\varepsilon},v_{0\varepsilon})$ denotes a sequence of approximations to the initial data $(u_0,v_0)$. This sequence can be constructed so that there exists $\varepsilon_0>0$ such that for any $0<\varepsilon<\varepsilon_0$,
\begin{align}
\left\{
  \begin{array}{ll}
 (u_{0\varepsilon},v_{0\varepsilon}) \ge 0, \quad \|u_\varepsilon(x,0)\|_1=\|u_0\|_1,\quad \|v_\varepsilon(x,0)\|_1=\|v_0\|_1, \\[1mm]
 \int |x|^2 u_{0\varepsilon} dx \to \int |x|^2 u_0(x)dx, \, \int |x|^2 v_{0\varepsilon} dx \to \int |x|^2 v_0(x)dx, \text{  as  }\varepsilon \to 0, \\[1mm]
 (u_{0\varepsilon}(x),v_{0\varepsilon}(x)) \to (u_0(x),v_0(x)) \text{  in  } L^q(\R^d),\text{  for  } 1 \le q < \infty,\text{  as  } \varepsilon \to 0.
  \end{array}
\right.
\end{align}

The regularized problem \eqref{uveps} admits global in time smooth solutions for any $\varepsilon>0$. Moreover, this approximation has been proved to be convergent. More precisely, following the arguments in \cite[Lemma 4.8]{CHVY19} and \cite[Section 4]{suku06}, we assert that if
\begin{align}\label{Linfinity}
\|u_\varepsilon(\cdot,t)\|_{L^\infty(\R^d)}+\|v_\varepsilon(\cdot,t)\|_{L^\infty(\R^d)} <C_0
\end{align}
for all $0<t<\infty$ (where $C_0$ is a constant independent of $\varepsilon$), then there exists a subsequence $\varepsilon_n \to 0$ such that
\begin{align}
  \begin{array}{ll}
 (u_{\varepsilon_n},v_{\varepsilon_n}) \rightarrow (u,v) \text{  in  } L^r(0,T;L^r(\R^d)),\quad 1 \le r<\infty \label{conver1}
  \end{array}
\end{align}
and $(u,v)$ is a weak solution to \eqref{uvsystem} on $[0,T).$ 

According to the above analysis, a weak solution $(u,v)$ to \eqref{uvsystem} on $[0,T)$ exists when \eqref{Linfinity} is fulfilled. Our focus will therefore be on establishing the validity of the $L^\infty$-bound. As shown in the following lemma, which provides local existence and blow-up criteria, this $L^\infty$ bound can be inferred from the $L^r$ norm of $(u_\varepsilon,v_\varepsilon)$ for $r>\max \left( \frac{d(2-m_1)}{2},\frac{d(2-m_2)}{2} \right)$. This further characterizes the maximal existence time.

\begin{lemma}[Local existence and blow-up criteria]\label{ueps}
Let $1/m_1+1/m_2=(d+2)/d$ with $1<m_1<2-2/d$ and $1<m_2<2-2/d$. Under assumption \eqref{initialdata} on the initial data, there exists a pair of weak solutions $(u,v)$ to \eqref{uvsystem} with a maximal existence time $T_w \in (0,\infty]$. Moreover, if $T_w<\infty$, then 
\begin{align*}
\limsup\limits_{t \to T_w} \left(\|u(\cdot,t)\|_{L^\infty(\R^d)}+\|v(\cdot,t)\|_{L^\infty(\R^d)}\right) = \infty.
\end{align*}
\end{lemma}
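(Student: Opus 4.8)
The plan is to work entirely with the regularized system \eqref{uveps}, whose solutions are global and smooth, and to reduce the whole statement to a single uniform-in-$\varepsilon$ a priori bound: control of a sufficiently high $L^r$ norm of $(u_\varepsilon,v_\varepsilon)$ propagates to control of the $L^\infty$ norm. Once this is available, the convergence statement \eqref{conver1} recorded above produces a genuine weak solution $(u,v)$, and the blow-up criterion follows by continuation. Concretely I would establish two facts: (a) if $\sup_{[0,T]}(\|u_\varepsilon\|_r+\|v_\varepsilon\|_r)<\infty$ for some fixed $r>r_0:=\max(d(2-m_1)/2,\,d(2-m_2)/2)$, then $\sup_{[0,T]}(\|u_\varepsilon\|_\infty+\|v_\varepsilon\|_\infty)<\infty$ uniformly in $\varepsilon$; and (b) there is a short time $T_0>0$, independent of $\varepsilon$, on which the $L^r$ norm stays bounded, which yields local existence.

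For the energy estimates, test the first equation of \eqref{uveps} with $p u_\varepsilon^{p-1}$ and integrate by parts. Using $-\Delta c_\varepsilon=J_\varepsilon\ast v_\varepsilon$ one obtains
\begin{align*}
\frac{d}{dt}\int u_\varepsilon^p\,dx+\frac{4p(p-1)m_1}{(p+m_1-1)^2}\int\left|\nabla u_\varepsilon^{(p+m_1-1)/2}\right|^2 dx=(p-1)\int u_\varepsilon^p\,(J_\varepsilon\ast v_\varepsilon)\,dx,
\end{align*}
together with the symmetric identity for $v_\varepsilon$. The coupling term on the right is the crux. Since $\|J_\varepsilon\|_1=1$, Young's inequality gives $\|J_\varepsilon\ast v_\varepsilon\|_q\le\|v_\varepsilon\|_q$ uniformly in $\varepsilon$; after this, Hölder's inequality, the Sobolev embedding $\|u_\varepsilon^{(p+m_1-1)/2}\|_{2d/(d-2)}^2\le C\|\nabla u_\varepsilon^{(p+m_1-1)/2}\|_2^2$, and the conserved mass let me interpolate $\int u_\varepsilon^p(J_\varepsilon\ast v_\varepsilon)$ between the diffusion (gradient) term and the $L^1$ bound. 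The restrictions $1<m_i<2-2/d$ and $r>r_0$ are precisely what render the interpolation exponents admissible, so that the coupling term is absorbed into the diffusion term up to a remainder that is a superlinear power of $\|u_\varepsilon\|_r^r+\|v_\varepsilon\|_r^r$.

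I would then run an Alikakos/Moser iteration along $p_k=2^k p_0$: summing the $u$- and $v$-estimates, the inequality above produces a recursion whose limit $k\to\infty$ bounds $\|u_\varepsilon(\cdot,t)\|_\infty+\|v_\varepsilon(\cdot,t)\|_\infty$ in terms of $\sup_{[0,T]}(\|u_\varepsilon\|_r+\|v_\varepsilon\|_r)$ and the mass, uniformly in $\varepsilon$; this is (a). For (b), the coupled $L^r$ identity reduces to a differential inequality $y'\le C y^{\theta}$ with $y=\|u_\varepsilon\|_r^r+\|v_\varepsilon\|_r^r$ and some $\theta>1$, whose comparison solution stays finite on an interval $[0,T_0]$ independent of $\varepsilon$; together with (a) and \eqref{conver1} this gives a weak solution on $[0,T_0)$. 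Finally, take $T_w$ to be the maximal existence time. If $T_w<\infty$ while $\limsup_{t\to T_w}(\|u\|_\infty+\|v\|_\infty)=M<\infty$, then interpolating the $L^r$ norm between the conserved $L^1$ mass and this $L^\infty$ bound keeps $y$ bounded up to $T_w$, so (a) allows extension of the solution beyond $T_w$, contradicting maximality; hence the $L^\infty$ norm must blow up.

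The main obstacle is closing the iteration uniformly in $\varepsilon$ with the nonlocal, coupled chemotaxis terms. One cannot bound $J_\varepsilon\ast v_\varepsilon$ in $L^\infty$ uniformly, since the $L^{s'}$ norms of $J_\varepsilon$ degenerate as $\varepsilon\to0$; every estimate must therefore pass through the mass-preserving bound $\|J_\varepsilon\|_1=1$. Moreover, because each species is driven by the other's density, the $u$- and $v$-estimates must be handled as a single system, and the delicate point is the exponent bookkeeping that verifies $r>r_0$ and $m_i<2-2/d$ keep the aggregation term subordinate to diffusion at every step of the iteration.
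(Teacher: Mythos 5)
Your proposal is correct and follows essentially the same route as the paper: uniform-in-$\varepsilon$ $L^r$ estimates for $r>\max\left(\frac{d(2-m_1)}{2},\frac{d(2-m_2)}{2}\right)$ obtained by testing with power functions and absorbing the coupling term (via $\|J_\varepsilon\ast\cdot\|_q\le\|\cdot\|_q$, H\"older, and Gagliardo--Nirenberg--Sobolev interpolation) into the diffusion, yielding a superlinear differential inequality and hence a local existence time independent of $\varepsilon$, followed by Moser/Alikakos iteration for the $L^\infty$ bound, compactness to pass to the limit, and a continuation argument for the blow-up criterion. The paper outsources the iteration and the continuation step to citations (\cite{BL14}, \cite{BJ09}) where you sketch them directly, but the substance is identical.
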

\begin{proof}
The key to proving local existence lies in establishing the following a priori estimates. We first denote
\begin{align*}
p_0=\max \left( \frac{d(2-m_1)}{2},\frac{d(2-m_2)}{2} \right).
\end{align*}

{\it\textbf{Step 1}} ($L^r$-estimates for $r>p_0$) \quad We multiply equation $\eqref{uveps}_1$ by $ru_\varepsilon^{r-1}$ and $\eqref{uveps}_3$ by $rv_\varepsilon^{r-1}$ to obtain
\begin{align}\label{eq22}
& \frac{d}{dt} \int (u_\varepsilon^r+v_\varepsilon^r) dx+\frac{4m_1 r(r-1)}{(m_1+r-1)^2} \int \left| \nabla u_\varepsilon^{\frac{m_1+r-1}{2}} \right|^2 dx+\frac{4m_2 r(r-1)}{(m_2+r-1)^2} \int \left| \nabla v_\varepsilon^{\frac{m_2+r-1}{2}} \right|^2 dx  \nonumber \\
=& (r-1) \int (u_\varepsilon^r J_\varepsilon \ast v_\varepsilon +v_\varepsilon^r J_\varepsilon \ast u_\varepsilon) dx \nonumber\\
\le & (r-1) \int (u_\varepsilon^r J_\varepsilon \ast u_\varepsilon +v_\varepsilon^r J_\varepsilon \ast v_\varepsilon) dx \nonumber\\
\le & (r-1) \left( \|u_\varepsilon^r \|_{(r+1)/r} \|J_\varepsilon \ast u_\varepsilon\|_{r+1}+\|v_\varepsilon^r \|_{(r+1)/r} \|J_\varepsilon \ast v_\varepsilon\|_{r+1} \right) \nonumber \\
\le & (r-1) \left( \|u_\varepsilon^r \|_{(r+1)/r} \|u_\varepsilon\|_{r+1}+\|v_\varepsilon^r \|_{(r+1)/r} \|v_\varepsilon\|_{r+1} \right) \nonumber \\
= & (r-1) \int (u_\varepsilon^{r+1}+v_\varepsilon^{r+1}) dx,
\end{align}
where we have used H\"{o}lder's inequality. For $r>p_0$, applying the Gagliardo-Nirenberg-Sobolev (GNS) inequality with $1<m_1<2-2/d$ and $1<m_2<2-2/d$, we find
\begin{align}
\int (u_\varepsilon^{r+1}+v_\varepsilon^{r+1}) dx \le C \left\|\nabla u_\varepsilon^{\frac{m_1+r-1}{2}} \right\|_2^{\alpha_1} \|u_\varepsilon\|_r^{\beta_1} +C \left\|\nabla v_\varepsilon^{\frac{m_2+r-1}{2}} \right\|_2^{\alpha_2} \|v_\varepsilon\|_r^{\beta_2},
\end{align}
where 
\begin{align*}
 \beta_1=r+1-\frac{r \alpha_1}{2}, \quad \beta_2=r+1-\frac{r \alpha_2}{2}
\end{align*}
and
\begin{align*}
\alpha_1=\frac{2}{1+2(r-d(2-m_1)/2)/d}<2, \quad \alpha_2=\frac{2}{1+2(r-d(2-m_2)/2)/d}<2
\end{align*}
ensured by $r>p_0$. Using Young's inequality, we further obtain
\begin{align}\label{eq33}
\int (u_\varepsilon^{r+1}+v_\varepsilon^{r+1}) dx \le & \frac{2m_1 r}{(m_1+r-1)^2}  \int \left|\nabla u_\varepsilon^{\frac{m_1+r-1}{2}}\right|^2 dx+\frac{2m_2 r}{(m_2+r-1)^2} \int \left|\nabla v_\varepsilon^{\frac{m_2+r-1}{2}}\right|^2 dx \nonumber \\
&+C(r,d) \left( \|u_\varepsilon\|_r^{r \delta_1} + \|v_\varepsilon\|_r^{r \delta_2} \right),
\end{align}
where 
\begin{align*}
\delta_1=1+\frac{1}{r-d(2-m_1)/2}>1, \quad \delta_2=1+\frac{1}{r-d(2-m_2)/2}>1.
\end{align*}
Substituting \eqref{eq33} into \eqref{eq22}, we arrive at                                                                                                                                                                                          
\begin{align}\label{eq44}                                            
&\frac{d}{dt} \int \left(u_\varepsilon^r+v_\varepsilon^r \right) dx+\frac{2m_1 r}{(m_1+r-1)^2}  \int \left|\nabla u_\varepsilon^{\frac{m_1+r-1}{2}}\right|^2 dx+\frac{2m_2 r}{(m_2+r-1)^2} \int \left|\nabla v_\varepsilon^{\frac{m_2+r-1}{2}}\right|^2 dx \nonumber \\
\le & C(r,d) \left( \|u_\varepsilon\|_r^{r \delta_1} + \|v_\varepsilon\|_r^{r \delta_2} \right). 
\end{align}
Consequently, the $L^r$-norm for $r>p_0$ is bounded locally in time:
\begin{align}\label{eq666}
\|u_\varepsilon\|_r^{r} + \|v_\varepsilon\|_r^{r} \le \frac{C(r,d)}{(T_r-t)^{r-p_0}},\quad T_r=\left( \|u_{0\varepsilon}\|_r^{r} + \|v_{0\varepsilon}\|_r^{r} \right)^{-\frac{1}{r-p_0}}. 
\end{align}
Returning to \eqref{eq44}, we also deduce that
\begin{align}\label{eq55}
\left\|\nabla u_\varepsilon^{\frac{m_1+r-1}{2}} \right\|_{L^2(0,T_r;L^2(\R^d))}+\left\|\nabla v_\varepsilon^{\frac{m_2+r-1}{2}} \right\|_{L^2(0,T_r;L^2(\R^d))} \le C\left( \|u_{0\varepsilon}\|_r, \|v_{0\varepsilon}\|_r \right).
\end{align}

{\it\textbf{Step 2}} ($L^\infty$-estimates and local existence)\quad As a direct consequence of Step 1, we apply the Moser iteration method to yield 
\begin{align}\label{eq66}
\displaystyle \sup_{0<t<T_r} \left(\|u_\varepsilon(\cdot,t)\|_{L^\infty(\R^d)} +\|v_\varepsilon(\cdot,t)\|_{L^\infty(\R^d)}\right) 
\le  C\left( \|u_{0\varepsilon}\|_1,\|u_{0\varepsilon}\|_{L^\infty(\R^d)}, \|v_{0\varepsilon}\|_1,\|v_{0\varepsilon}\|_{L^\infty(\R^d)}\right),
\end{align}
following the proof of \cite[Theorem 4.2]{BL14} verbatim. Armed with the regularities \eqref{eq666}, \eqref{eq55}, and \eqref{eq66}, we further derive
\begin{align*}
\|\nabla u_\varepsilon \|_{L^2(0,T_r;L^2(\R^d))} + \|\nabla v_\varepsilon \|_{L^2(0,T_r;L^2(\R^d))}   \le C, \\
\|u_\varepsilon \nabla c_\varepsilon \|_{L^\infty(0,T_r;L^\infty(\R^d))}+ \|v_\varepsilon \nabla z_\varepsilon \|_{L^\infty(0,T_r;L^\infty(\R^d))} \le C,
\end{align*}
where the constants $C$ depend only on $\|u_{0\varepsilon}\|_1$, $\|v_{0\varepsilon}\|_1$, $\|u_{0\varepsilon}\|_{L^\infty(\R^d)}$ and $\|v_{0\varepsilon}\|_{L^\infty(\R^d)}$. These a priori bounds hold uniformly in $\varepsilon$. Thus, we can pass to the limits $u_\varepsilon \to u$ and $v_\varepsilon \to v$ as $\varepsilon \to 0$ (without relabeling), due to the time compactness ensured by the Lions-Aubin lemma. In the limit, we obtain the local existence of the weak solution to \eqref{uvsystem}. Finally, in light of the proof of \cite[Theorem 2.4]{BJ09}, we characterize the maximal existence time of the weak solution and complete the proof. 
\end{proof}

\section{Steady states} \label{sec3}

This section is devoted to the analysis of stationary solutions $(U_s,V_s)$ to \eqref{uvsystem}. Section \ref{sec31} establishes general properties of these solutions. Specifically, Lemma \ref{31} provides a Pohozaev-Rellich type identity. Leveraging this identity, Proposition \ref{prop1006} determines the constant chemical potentials on the supports of $U_s$ and $V_s$. In Section \ref{sec32}, we explore the connection between the Hardy-Littlewood-Sobolev extremals and the steady states $(U_s,V_s)$, thus completing the proof of Theorem \ref{steadymain}. This link is crucial for establishing the global existence and finite-time blow-up of weak solutions in \eqref{uvsystem}.

\subsection{Steady states: proof of Theorem \ref{steadymain}}\label{sec31}
The stationary system for \eqref{uvsystem} is considered in the sense of distributions
\begin{align}\label{eq1}
\left\{
  \begin{array}{ll}
    \Delta U_s^{m_1}(x)-\nabla \cdot (U_s(x) \nabla C_s(x))=0, & x \in \R^d, \\
    \Delta V_s^{m_2}(x)-\nabla \cdot (V_s(x) \nabla Z_s(x))=0, & x \in \R^d,
  \end{array}
\right.
\end{align}
where the potentials $C_s(x)$ and $Z_s(x)$ are given by
\begin{align}
C_s=c_d \int \frac{V_s(y)}{|x-y|^{d-2}}dy,\quad Z_s=c_d \int \frac{U_s(y)}{|x-y|^{d-2}}dy.
\end{align}
We begin by deriving a Pohozaev-type identity to characterize the critical points of the free energy $F(u,v)$ associated with the steady states.

\begin{lemma}[A Pohozaev identity for stationary solutions]\label{31}
Assume that $U_s \in L^{m_1}(\R^d)$ and $V_s \in L^{m_2}(\R^d)$ satisfy \eqref{eq1} in the sense of distributions. Then the following identity holds:
\begin{align}\label{eq0}
2d \int U_s^{m_1} dx+2d \int V_s^{m_2} dx=2 c_d (d-2) \iint \frac{U_s(x) V_s(y)}{|x-y|^{d-2}} dxdy.
\end{align}
\end{lemma}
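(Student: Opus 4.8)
The plan is to derive \eqref{eq0} directly from the two stationary equations by testing against the dilation field $x$, in the spirit of the classical Pohozaev--Rellich argument, and then to symmetrize the two resulting identities so that the cross terms collapse into a single total divergence.

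First I would record the first-order form of the stationary system. Since $\Delta U_s^{m_1}=\nabla\cdot(U_s\nabla C_s)$ with $U_s$ supported on $\Omega_1$, and likewise for $V_s$, the reduction leading to \eqref{star0} gives $\frac{m_1}{m_1-1}U_s^{m_1-1}=C_s+\overline C_1$ on $\Omega_1$; taking gradients (and using $U_s=0$ off $\Omega_1$) yields the pointwise relations
$$\nabla U_s^{m_1}=U_s\nabla C_s,\qquad \nabla V_s^{m_2}=V_s\nabla Z_s \quad\text{a.e. in }\R^d,$$
in which the constants $\overline C_1,\overline C_2$ have disappeared, so no knowledge of their values is required at this stage. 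I would also use $-\Delta C_s=V_s$ and $-\Delta Z_s=U_s$, together with the elliptic regularity and the decay $C_s,Z_s=O(|x|^{2-d})$ and $\nabla C_s,\nabla Z_s=O(|x|^{1-d})$ inherited from the Riesz representation.

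Next I would dot each relation with $x$ and integrate. On the left, integration by parts gives $\int x\cdot\nabla U_s^{m_1}\,dx=-d\int U_s^{m_1}\,dx$ and similarly $-d\int V_s^{m_2}\,dx$, while on the right I substitute $U_s=-\Delta Z_s$, $V_s=-\Delta C_s$ and integrate by parts once more to reach
$$-d\int U_s^{m_1}=\int\nabla Z_s\cdot\nabla(x\cdot\nabla C_s),\qquad -d\int V_s^{m_2}=\int\nabla C_s\cdot\nabla(x\cdot\nabla Z_s).$$
Expanding $\nabla(x\cdot\nabla C_s)=\nabla C_s+(x\cdot\nabla)\nabla C_s$ and adding the two identities, the two $\nabla C_s\cdot\nabla Z_s$ terms combine to $2\int\nabla C_s\cdot\nabla Z_s$, while the second-derivative contributions assemble into the total divergence $\int x\cdot\nabla(\nabla C_s\cdot\nabla Z_s)=-d\int\nabla C_s\cdot\nabla Z_s$; hence the right-hand side equals $(2-d)\int\nabla C_s\cdot\nabla Z_s$. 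Finally I would identify $\int\nabla C_s\cdot\nabla Z_s=\int(-\Delta C_s)Z_s=\int V_sZ_s=c_d\iint\frac{U_s(x)V_s(y)}{|x-y|^{d-2}}$, so that $-d\big(\int U_s^{m_1}+\int V_s^{m_2}\big)=(2-d)c_d\iint\frac{U_sV_s}{|x-y|^{d-2}}$, and multiplying by $-2$ is exactly \eqref{eq0}.

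The main obstacle I anticipate is not the algebra but its rigorous justification at the available regularity: $(U_s,V_s)$ solves a \emph{degenerate} problem and is only known a priori to lie in $L^{m_1}\times L^{m_2}$, so each integration by parts should be carried out against a cutoff $\chi(x/R)$ with the boundary and commutator error terms controlled as $R\to\infty$. The decay $O(|x|^{2-2d})$ of $\nabla C_s\cdot\nabla Z_s$ makes the surface integrals of the second-derivative terms scale like $R^{2-d}\to0$ for $d\ge3$, and $U_s^{m_1},V_s^{m_2}\in L^1$ lets me extract a sequence $R_n\to\infty$ along which the remaining surface integrals vanish; securing enough local regularity of $C_s,Z_s$ (so that the second derivatives are meaningful) by elliptic theory is the one step that needs genuine care.
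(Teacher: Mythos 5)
Your computation from the first-order relations onward is formally sound and would indeed produce \eqref{eq0}, but your very first step is a genuine gap. The lemma assumes only that $(U_s,V_s)\in L^{m_1}(\R^d)\times L^{m_2}(\R^d)$ solves the divergence-form system \eqref{eq1} in the sense of distributions, and from $\nabla\cdot\left(\nabla U_s^{m_1}-U_s\nabla C_s\right)=0$ one cannot conclude $\nabla U_s^{m_1}=U_s\nabla C_s$ a.e.: a divergence-free vector field need not vanish. The ``reduction leading to \eqref{star0}'' that you invoke is not a consequence of \eqref{eq1} alone; in the paper it is established in Proposition \ref{prop1006} for critical points of the free energy, and the proof of that proposition itself uses the identity \eqref{eq0} of this very lemma (to show $\overline{C}_1=\overline{C}_2=0$). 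So your argument is circular relative to the paper's logical structure and, taken on its own, rests on an unproved (and in general false) implication from the second-order equation to the first-order one.

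Beyond this, your route demands regularity and decay that the hypotheses do not supply: you use second derivatives of $C_s,Z_s$, pointwise decay $C_s,Z_s=O(|x|^{2-d})$ and $\nabla C_s,\nabla Z_s=O(|x|^{1-d})$ (which requires at least $U_s,V_s\in L^1$, not assumed here — only $L^{m_1}$ and $L^{m_2}$ are), and control of surface terms against the linearly growing field $x$. The paper's proof avoids all of this machinery: it tests the second-order distributional equations \eqref{eq1} directly with a compactly supported truncation $\psi_R$ of $|x|^2$, and the key trick is to symmetrize the two drift contributions into a double integral whose kernel $\frac{(\nabla\psi_R(x)-\nabla\psi_R(y))\cdot(x-y)}{|x-y|^2}$ is bounded uniformly in $R$ (and converges pointwise to $2$), while $\Delta\psi_R$ is bounded and converges to $2d$. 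Dominated convergence then applies using only $U_s^{m_1},V_s^{m_2}\in L^1(\R^d)$ and the finiteness of $\iint \frac{U_s(x)V_s(y)}{|x-y|^{d-2}}\,dxdy$ (HLS at the critical exponent), with no first-order form, no potential-theoretic decay, and no second derivatives of $C_s,Z_s$ ever needed. If you want to salvage your approach, you would have to either add the first-order (Euler--Lagrange) form as a hypothesis — proving a weaker lemma than the paper's — or find an independent derivation of it, and then still carry out the cutoff analysis you defer at the end.
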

\begin{proof}
We follow a similar argument as in \cite{bp07}. Consider a cut-off function $\psi_R(x) \in C_0^\infty(\R^d)$ such that $\psi_R(x)=|x|^2$ for $|x|<R$ and $\psi_R(x)=0$ for $|x| \ge 2R$. Then we compute
\begin{align}\label{eq2}
\int \Delta U_s^{m_1} dx=\int U_s^{m_1} \Delta \psi_R(x) dx,\quad \int \Delta V_s^{m_2} dx=\int V_s^{m_2} \Delta \psi_R(x) dx.
\end{align}
We also have
\begin{align*}
\int \psi_R(x) \nabla \cdot (U_s(x) \nabla C_s(x)) dx &=-\int \nabla \psi_R(x) \cdot U_s \nabla C_s dx \\
&=c_d (d-2) \iint U_s(x) \frac{\nabla \psi_R(x) \cdot (x-y)}{|x-y|^d}  V_s(y) dxdy
\end{align*}
and
\begin{align*}
\int \psi_R(x) \nabla \cdot (V_s(x) \nabla Z_s(x)) dx &=-\int \nabla \psi_R(x) \cdot V_s \nabla Z_s dx \\
&=c_d (d-2) \iint U_s(y) \frac{\nabla \psi_R(x) \cdot (x-y)}{|x-y|^d}  V_s(x) dxdy.
\end{align*}
Summing these two expressions yields
\begin{align}\label{eq3}
& \int \psi_R(x) \nabla \cdot (U_s(x) \nabla C_s(x)) dx+\int \psi_R(x) \nabla \cdot (V_s(x) \nabla Z_s(x)) dx \nonumber \\
=& c_d (d-2) \iint \frac{(\nabla \psi_R(x)-\nabla \psi_R(y))\cdot (x-y)}{|x-y|^2} \frac{U_s(x) V_s(y)}{|x-y|^{d-2}} dxdy.
\end{align}
Since $\Delta \psi_R(x)$ and $\frac{(\nabla \psi_R(x)-\nabla \psi_R(y))\cdot (x-y)}{|x-y|^2}$ are bounded, all terms on the right-hand sides of \eqref{eq2} and \eqref{eq3} are bounded. Therefore, as $R \to \infty,$ we may pass to the limit in each term using the Lebesgue dominated convergence theorem to complete the proof. 
\end{proof}

Combining the identity \eqref{eq0} with the variational structure of the free energy, we deduce the following results.

\begin{proposition}[Euler-Lagrange equations for stationary solutions]\label{prop1006}
Let $\Omega_1, \Omega_2 \subset \R^d$ be two connected open sets. Suppose that $U_s \in L_+^1 \cap L^{m_1} (\R^d)$ and $V_s \in L_+^1 \cap L^{m_2}(\R^d)$ with $\int U_s dx=M_1$ and $\int V_s dx=M_2$. Assume also that $U_s>0$ in $\Omega_1$, $U_s=0$ in $\R^d \setminus \Omega_1$, and $V_s>0$ in $\Omega_2$, $V_s=0$ in $\R^d \setminus \Omega_2$. If $(U_s,V_s)$ is a critical point of $F(u,v)$, then it satisfies the Euler-Lagrange conditions
\begin{align}
\left\{
  \begin{array}{ll}
    \frac{m_1}{m_1-1} U_s^{m_1-1}-C_s=\overline{C}_1, & \text{a.e. in } \Omega_1, \\
    \frac{m_2}{m_2-1} V_s^{m_2-1}-Z_s=\overline{C}_2, & \text{a.e. in } \Omega_2,
  \end{array}
\right.
\end{align}
where the constants are given by
\begin{align}
\left\{
  \begin{array}{ll}
    \overline{C}_1=\frac{1}{M_1} \left( \frac{m_1}{m_1-1} \int U_s^{m_1} dx-\int C_s U_s dx \right), \\
    \overline{C}_2=\frac{1}{M_2} \left( \frac{m_2}{m_2-1} \int V_s^{m_2} dx-\int Z_s V_s dx \right).
  \end{array}
\right.
\end{align}
Moreover, if $1/m_1+1/m_2=(d+2)/d$, then $\overline{C}_1=\overline{C}_2=0$ and $\Omega_1=\Omega_2=\R^d$. Consequently,
\begin{align}\label{UsVsm1m2}
\frac{m_1}{m_1-1} \int U_s^{m_1} dx=\frac{m_2}{m_2-1} \int V_s^{m_2} dx.
\end{align}
\end{proposition}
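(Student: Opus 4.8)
The plan is to split the argument into two parts: first establishing the Euler--Lagrange equations together with an explicit identification of the Lagrange multipliers $\overline{C}_1,\overline{C}_2$, and then forcing these multipliers to vanish by combining sign information with the Pohozaev identity \eqref{eq0} specialized to the critical exponent.

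For the first part I would use that $(U_s,V_s)$ is a \emph{constrained} critical point of $F$ subject to the mass constraints $\int U_s\,dx=M_1$ and $\int V_s\,dx=M_2$. Taking a mass-preserving variation $U_s+\tau\varphi$ with $\int\varphi\,dx=0$ and $\mathrm{supp}\,\varphi\subset\Omega_1$, the first variation of the entropy term produces $\frac{m_1}{m_1-1}U_s^{m_1-1}$, while the variation of the interaction term produces exactly $-C_s$, since $C_s=c_d\int V_s(y)|x-y|^{-(d-2)}dy$. Requiring the variation to vanish for all such $\varphi$ forces $\frac{m_1}{m_1-1}U_s^{m_1-1}-C_s$ to be constant on $\Omega_1$, and this constant is $\overline{C}_1$. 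Multiplying the resulting identity by $U_s$ and integrating over $\R^d$ (recall $U_s=0$ off $\Omega_1$) yields the stated formula for $\overline{C}_1$, and symmetrically for $\overline{C}_2$.

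The heart of the proof is the second part, which I would obtain by merging two independent facts. First, \emph{sign information}: because $C_s>0$ everywhere and $C_s\to 0$ at infinity (it is the Riesz potential of the nonnegative $L^1$ function $V_s$), the relation $\frac{m_1}{m_1-1}U_s^{m_1-1}=C_s+\overline{C}_1$ shows that $\overline{C}_1>0$ is incompatible with $U_s\in L^1$ (it would force $U_s$ to a positive constant at infinity), whereas $\overline{C}_1<0$ makes $\Omega_1=\{C_s>-\overline{C}_1\}$ a bounded set on whose boundary $C_s+\overline{C}_1=0$; hence $\overline{C}_1\le 0$ in every admissible case, with equality exactly when $\Omega_1=\R^d$, and likewise $\overline{C}_2\le 0$. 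Second, \emph{balance information}: writing $I:=c_d\iint U_s(x)V_s(y)|x-y|^{-(d-2)}dxdy$, the Euler--Lagrange identities give $\frac{m_1}{m_1-1}\int U_s^{m_1}dx=I+\overline{C}_1M_1$ and $\frac{m_2}{m_2-1}\int V_s^{m_2}dx=I+\overline{C}_2M_2$; substituting these into the Pohozaev identity \eqref{eq0} and using $\frac{m_1-1}{m_1}+\frac{m_2-1}{m_2}=2-\frac{1}{m_1}-\frac{1}{m_2}=\frac{d-2}{d}$, the $I$-terms cancel exactly and I am left with the single relation $\frac{m_1-1}{m_1}\overline{C}_1M_1+\frac{m_2-1}{m_2}\overline{C}_2M_2=0$. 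Since both summands are nonpositive and the weights $\frac{m_i-1}{m_i}M_i$ are strictly positive, each summand vanishes, so $\overline{C}_1=\overline{C}_2=0$; the sign dichotomy then gives $\Omega_1=\Omega_2=\R^d$, and \eqref{UsVsm1m2} follows at once.

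I expect the main obstacle to lie in rigorously justifying the decay/boundary analysis behind the sign claim $\overline{C}_1\le 0$: one must verify that $C_s$ is continuous and genuinely tends to $0$ at infinity, that the Euler--Lagrange equation extends continuously up to $\partial\Omega_1$ so that $U_s=0$ there yields $C_s+\overline{C}_1=0$, and that $\Omega_1$ coincides with the superlevel set $\{C_s>-\overline{C}_1\}$. A secondary point requiring care is the admissibility of the mass-preserving variations (keeping $u\ge 0$ and within the support for small $\tau$) in the first-variation step. By contrast, the algebraic cancellation in the Pohozaev computation is routine once the critical relation $1/m_1+1/m_2=(d+2)/d$ is invoked, and the final step — that two nonpositive quantities with a vanishing positively weighted sum must both be zero — is immediate.
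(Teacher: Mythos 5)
Your proposal is correct and follows essentially the same route as the paper's proof: mass-preserving variations give the Euler--Lagrange relations with multipliers, the Pohozaev identity \eqref{eq0} combined with $1/m_1+1/m_2=(d+2)/d$ gives a linear relation between $M_1\overline{C}_1$ and $M_2\overline{C}_2$ (your weighted-sum identity $\frac{m_1-1}{m_1}M_1\overline{C}_1+\frac{m_2-1}{m_2}M_2\overline{C}_2=0$ is algebraically equivalent to the paper's relation \eqref{eq6}), and boundary/decay sign information then forces both multipliers to vanish and the supports to be all of $\R^d$. The regularity caveats you flag (continuity of $C_s$, vanishing of $U_s$ up to $\partial\Omega_1$) are present at the same level of informality in the paper's own argument, which differs only cosmetically in that it handles the case $\Omega_i=\R^d$ via $L^p$-integrability of the potentials (weak Young inequality) rather than pointwise decay, and organizes the sign analysis as a case-by-case contradiction instead of your cleaner ``two nonpositive terms summing to zero'' step.
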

\begin{proof}
We adapt some ideas from \cite{BL13,CCV15}. For any $\psi_1 \in C_0^\infty(\Omega_1)$ and $\psi_2 \in C_0^\infty(\Omega_2)$, define
\begin{align}
\varphi_1(x)=\psi_1(x)-\frac{U_s}{M_1} \int_{\Omega_1} \psi_1 dx, \\
\varphi_2(x)=\psi_2(x)-\frac{V_s}{M_2} \int_{\Omega_2} \psi_2 dx. 
\end{align}
Then $\text{supp}(\varphi_1) \subseteq \Omega_1$, $\text{supp}(\varphi_2) \subseteq \Omega_2$, and $\int_{\Omega_1} \varphi_1 dx=0$, $\int_{\Omega_2} \varphi_2 dx=0$. Moreover, there exist
\begin{align*}
\varepsilon_1: = \frac{ \displaystyle \min_{y \in  \operatorname{supp}(\varphi_1)} U_s(y)} { \displaystyle \max_{y \in \operatorname{supp}(\varphi_1)} \left| \varphi_1 (y) \right|}>0,\quad \varepsilon_2: = \frac{ \displaystyle \min_{y \in  \operatorname{supp}(\varphi_2)} V_s(y)} { \displaystyle \max_{y \in \operatorname{supp}(\varphi_2)} \left| \varphi_2 (y) \right|}>0
\end{align*}
such that $U_s+\varepsilon \varphi_1 \ge 0$ in $\Omega_1$ and $V_s+\varepsilon \varphi_2 \ge 0$ in $\Omega_2$ for all $0<\varepsilon<\min(\varepsilon_1,\varepsilon_2)$. Now, $(U_s,V_s)$ is a critical point of $F(u,v)$ if and only if
\begin{align}
\frac{d}{d \varepsilon} \Big |_{\varepsilon=0} F(U_s+\varepsilon \varphi_1,V_s)=0,\quad \int_{\Omega_1} \varphi_1(x) dx=0, \\
\frac{d}{d \varepsilon} \Big |_{\varepsilon=0} F(U_s,V_s+\varepsilon \varphi_2)=0,\quad \int_{\Omega_2} \varphi_2(x) dx=0.
\end{align}
A direct computation yields
\begin{align*}
\int_{\Omega_1} \left( \mu_{1s}-\frac{1}{M_1} \int_{\Omega_1} \mu_{1s} U_s dx \right) \psi_1(x) dx=0,\quad \text{for any }\psi_1 \in C_0^\infty(\Omega_1), \\
\int_{\Omega_2} \left( \mu_{2s}-\frac{1}{M_2} \int_{\Omega_2} \mu_{2s} V_s dx \right) \psi_2(x) dx=0,\quad \text{for any } \psi_2 \in C_0^\infty(\Omega_2),
\end{align*}
where the chemical potentials are defined as
\begin{align}
\mu_{1s}=\frac{m_1}{m_1-1} U_s^{m_1-1}-C_s, \quad \mu_{2s}=\frac{m_2}{m_2-1} V_s^{m_2-1}-Z_s.
\end{align}
Therefore, we obtain
\begin{align}\label{eq4}
\frac{m_1}{m_1-1} U_s^{m_1-1}-C_s=\frac{1}{M_1} \left( \frac{m_1}{m_1-1} \int U_s^{m_1} dx-\int U_s C_s dx \right)=\overline{C}_1,\quad \text{a.e. in } \Omega_1,
\end{align}
and 
\begin{align}\label{eq5}
\frac{m_2}{m_2-1} V_s^{m_2-1}-Z_s=\frac{1}{M_2} \left( \frac{m_2}{m_2-1} \int V_s^{m_2} dx-\int V_s Z_s dx \right)=\overline{C}_2,\quad \text{a.e. in } \Omega_2.
\end{align}
On the other hand, we can infer from \eqref{eq0} that
\begin{align}
\overline{C}_1=\frac{1}{M_1} \left( \left( \frac{m_1}{m_1-1}-\frac{d}{d-2}\right)\int U_s^{m_1} dx-\frac{d}{d-2}\int V_s^{m_2} dx \right), \\
\overline{C}_2=\frac{1}{M_2} \left( \left( \frac{m_2}{m_2-1}-\frac{d}{d-2}\right)\int V_s^{m_2} dx-\frac{d}{d-2}\int U_s^{m_1} dx \right).
\end{align}
Thanks to the condition $1/m_1+1/m_2=(d+2)/d$, we find 
\begin{align}\label{eq6}
-M_1 \overline{C}_1=\frac{\frac{m_1}{m_1-1}-\frac{d}{d-2}}{\frac{d}{d-2}} M_2 \overline{C}_2=\frac{\frac{d}{d-2}}{\frac{m_2}{m_2-1}-\frac{d}{d-2}} M_2 \overline{C}_2.
\end{align}
Since $U_s \in L^1 \cap L^{m_1}(\R^d),$ the weak Young inequality implies that $\frac{1}{|x|^{d-2}} \ast U_s \in L^r(\R^d)$ for each $r \in \left( \frac{d}{d-2},\frac{1}{\frac{1}{m_1}-\frac{2}{d}} \right].$ Similarly, from $V_s \in L^1 \cap L^{m_2}(\R^d)$ we deduce that $\frac{1}{|x|^{d-2}} \ast V_s \in L^q(\R^d)$ for each $q \in \left( \frac{d}{d-2},\frac{1}{\frac{1}{m_2}-\frac{2}{d}} \right].$ In particular, both $\frac{1}{|x|^{d-2}} \ast V_s$ and $U_s^{m_1-1}$ belong to $L^{m_1/(m_1-1)}(\R^d).$ Analogously, both $\frac{1}{|x|^{d-2}} \ast U_s$ and $V_s^{m_2-1}$ belong to $L^{m_2/(m_2-1)}(\R^d).$ These properties with \eqref{eq4} and \eqref{eq5} allow us to assert that $\Omega_1=\Omega_2=\R^d.$ If not, we choose a sequence of points $x_{1n} \to x_1 \in \partial \Omega_1$ with $x_{1n} \in \Omega_1$. Then $\frac{m_1}{m_1-1} U_s^{m_1-1}(x_{1n}) \to 0$, whereas $\int \frac{V_s(y)}{|x_{1n}-y|^{d-2}}dy>0$. It then follows from \eqref{eq4} that $\overline{C}_1 < 0$. Similarly, we choose a sequence of points $x_{2n} \to x_2 \in \partial \Omega_2$ with $x_{2n} \in \Omega_2$, we deduce from \eqref{eq5} that $\overline{C}_2 < 0$. Hence, \eqref{eq6} together with the facts that $1/m_1>2/d$ and $1/m_2>2/d$ give rise to $\overline{C}_1=\overline{C}_2=0$. Therefore,  
\begin{align}
\left\{
  \begin{array}{ll}
    \frac{m_1}{m_1-1} U_s^{m_1-1}-C_s=0, &\text{in } \R^d, \\
   \frac{m_2}{m_2-1} V_s^{m_2-1}-Z_s=0, &\text{in } \R^d.
  \end{array}
\right.
\end{align}
This completes the proof. 
\end{proof}

By Proposition \ref{prop1006}, one has 
\begin{align}\label{eq7}
\left\{
  \begin{array}{ll}
    -\Delta C_s=\left( \frac{m_2-1}{m_2} Z_s  \right)^{\frac{1}{m_2-1}}, & \text{in } \R^d, \\
    -\Delta Z_s=\left( \frac{m_1-1}{m_1} C_s  \right)^{\frac{1}{m_1-1}}, & \text{in } \R^d.
  \end{array}
\right.
\end{align}
It was proved by Chen and Li \cite{CLPDE05} that all positive solutions $C_s, Z_s$ of \eqref{eq7} are radially symmetric and monotone decreasing about some point. In particular, if $m_1=m_2=2d/(d+2)$, then $V_s=C_s$ and they both assume the standard form \cite{CLAA05}
\begin{align}
c \left( \frac{\lambda}{\lambda^2+|x-x_0|^2}  \right)^{\frac{d-2}{2}}
\end{align} 
with some constant $c=c(d)$, and for some $\lambda>0$ and $x_0 \in \R^d$. This completes our analysis of the stationary solutions and the proof of Theorem \ref{steadymain}.

\subsection{Extremal functions for the Hardy-Littlewood-Sobolev inequality}\label{sec32}

This subsection establishes the connection between the extremal functions of the Hardy-Littlewood-Sobolev (HLS) inequality \eqref{fh} and the steady states of system \eqref{eq7}. We first introduce the functionals
\begin{align}\label{hu}
h(u,v):=\iint \frac{u(x)v(y)}{|x-y|^{d-2}} dxdy
\end{align}
and
\begin{align}\label{Ju}
J(u,v):=\frac{h(u,v)}{\|u\|_{m_1} \|v\|_{m_2}}.
\end{align}
The following result of Lieb \cite{lieb83} on the existence and properties of the extremal functions for the HLS inequality is fundamental to our analysis.

\begin{proposition}[Existence of the optimizer] \label{Cstar} \quad Let $1/m_1+1/m_2=(d+2)/d$ and suppose $u \in L^{m_1}(\R^d), v \in L^{m_2}(\R^d)$. Then there exists a sharp constant $C_*$ such that
\begin{align}\label{VHLS}
J(U,V) = C_*,
\end{align}
where $(U,V)$ are non-negative, radially symmetric (up to translation) and non-increasing functions. Moreover, the sharp constant admits the upper bound
\begin{align}
C_* \le \frac{d}{2}\left(\frac{2 \pi^{d/2}}{d \Gamma(d/2)}\right)^{(d-2)/d} \frac{1}{m_1 m_2}\left( \left(\frac{(d-2)/d}{1-1/q}\right)^{(d-2)/d}+\left( \frac{(d-2)/d}{1-1/r} \right)^{(d-2)/d} \right).
\end{align}
\end{proposition}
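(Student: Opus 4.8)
The plan is to realize $C_*$ as the supremum of the dilation- and translation-invariant quotient $J$ in \eqref{Ju} and to produce a maximizer by the direct method, combining symmetric decreasing rearrangement with a concentration-compactness argument; the upper bound on $C_*$ will then drop out by specializing the HLS estimate already in hand. First I would record boundedness: applying the HLS inequality \eqref{fh} with $\beta=d-2$, $q=m_1$, $r=m_2$ (so that the balance $1/q+1/r+\beta/d=2$ is exactly the hypothesis $1/m_1+1/m_2=(d+2)/d$) gives $h(u,v)\le C(d,d-2,m_1)\,\|u\|_{m_1}\|v\|_{m_2}$, whence $C_*:=\sup J\le C(d,d-2,m_1)<\infty$. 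Specializing \eqref{cdbeta} to $\beta=d-2$, using $\tfrac{d}{d-\beta}=\tfrac d2$ and $\tfrac{\pi^{d/2}}{\Gamma(d/2+1)}=\tfrac{2\pi^{d/2}}{d\,\Gamma(d/2)}$, reproduces verbatim the stated bound on $C_*$.

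Next I would fix a maximizing sequence $(u_n,v_n)$ with $\|u_n\|_{m_1}=\|v_n\|_{m_2}=1$ and pass to its symmetric decreasing rearrangement. Since the Riesz kernel $|x|^{-(d-2)}$ is itself symmetric and non-increasing, the Riesz rearrangement inequality yields $h(u_n^*,v_n^*)\ge h(u_n,v_n)$ while the $L^{m_1}$ and $L^{m_2}$ norms are preserved, so $(u_n^*,v_n^*)$ is again maximizing. Hence I may assume each $u_n,v_n$ is nonnegative, radially symmetric about the origin, and non-increasing; this is precisely the structural conclusion claimed for the limit. The monotone radial profile moreover gives the pointwise tail bounds $u_n(x)\le\alpha_d^{-1/m_1}|x|^{-d/m_1}$ and $v_n(x)\le\alpha_d^{-1/m_2}|x|^{-d/m_2}$, read off from $1=\|u_n\|_{m_1}^{m_1}\ge u_n(|x|)^{m_1}\alpha_d|x|^{d}$, so that no mass can escape to infinity on a fixed annulus.

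I would then exploit the joint dilation invariance of $J$: under $u(x)\mapsto\lambda^{d/m_1}u(\lambda x)$ and $v(x)\mapsto\lambda^{d/m_2}v(\lambda x)$ both norms are unchanged, and since the resulting exponent $d(1/m_1+1/m_2)-(d+2)$ vanishes under the critical relation, $h$ is invariant as well. Fixing the scale by, say, $\int_{|x|\le1}u_n^{m_1}\,dx=\tfrac12$, I would apply Helly's selection theorem for monotone functions to extract a subsequence with $u_n\to U$ and $v_n\to V$ pointwise a.e. The limits $U,V$ are radial and non-increasing and, by Fatou's lemma, satisfy $\|U\|_{m_1}\le1$ and $\|V\|_{m_2}\le1$.

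The \emph{main obstacle} is to show that no interaction energy leaks in the limit, i.e.\ $h(u_n,v_n)\to h(U,V)=C_*$, which then forces $U,V\not\equiv0$ together with equality in both norm constraints, so that $(U,V)$ is the sought extremal. I would organize this via Lions' concentration-compactness dichotomy for the group generated by dilations and translations. Vanishing is excluded because it would send $J$ to $0$, contradicting maximality, and the scaling normalization rules out drift to scale $0$ or $\infty$. The delicate case is dichotomy, namely a splitting of $u_n$ or $v_n$ into bumps at separated scales — including the possibility that $u_n$ and $v_n$ concentrate at different relative scales. I would exclude it by a strict superadditivity argument for the scale-invariant supremum: any such splitting loses the cross interaction carried by the long-range kernel $|x|^{-(d-2)}$, so the total energy would fall strictly below $C_*$, a contradiction. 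Assembling the extraction with this strict inequality delivers the maximizer $(U,V)$ satisfying $J(U,V)=C_*$ with the asserted symmetry, which is exactly Lieb's theorem \cite{lieb83}.
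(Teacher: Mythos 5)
The paper does not actually prove this proposition: it is stated as a quotation of Lieb's theorem \cite{lieb83} (``The following result of Lieb \ldots is fundamental to our analysis''), so your proposal has to be measured against Lieb's original argument rather than against anything in the text. The skeleton you set up is indeed the classical route, and its elementary components are correct: taking $\beta=d-2$, $q=m_1$, $r=m_2$ in \eqref{fh} is exactly the critical relation $1/m_1+1/m_2=(d+2)/d$, so $C_*=\sup J<\infty$; specializing \eqref{cdbeta} with $\Gamma(d/2+1)=(d/2)\Gamma(d/2)$ gives the stated upper bound verbatim; the Riesz rearrangement inequality (the kernel $|x|^{-(d-2)}$ being symmetric decreasing) reduces matters to radial non-increasing maximizing sequences; the joint dilation $u\mapsto\lambda^{d/m_1}u(\lambda\,\cdot)$, $v\mapsto\lambda^{d/m_2}v(\lambda\,\cdot)$ preserves both norms and $h$ precisely because the exponent $d(1/m_1+1/m_2)-(d+2)$ vanishes; and the pointwise bound $u_n(x)\le\alpha_d^{-1/m_1}|x|^{-d/m_1}$ together with Helly's selection theorem produces an a.e.\ convergent subsequence with radial, non-increasing limits satisfying the Fatou bounds.

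The genuine gap is the step you yourself flag as the main obstacle: proving that no interaction energy leaks, i.e.\ $h(u_n,v_n)\to h(U,V)=C_*$. Your treatment of dichotomy---``any such splitting loses the cross interaction carried by the long-range kernel, so the total energy would fall strictly below $C_*$''---is an assertion of exactly the conclusion that has to be proved, and it is the entire analytic content of Lieb's theorem. Note also that your normalization $\int_{|x|\le 1}u_n^{m_1}\,dx=1/2$ fixes only the single parameter of the joint dilation group; it does not prevent $u_n$ from splitting into a bump collapsing to the origin plus a spreading tail (both compatible with the normalization), nor does it control the scales of $v_n$ relative to those of $u_n$, since a relative rescaling of $v$ alone is \emph{not} a symmetry of $J$ and hence cannot be normalized away---it must be excluded by an inequality. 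Closing this requires real work: either Lieb's missing-mass argument (Brezis--Lieb splitting plus strict monotonicity of the constrained supremum) or a full Lions dichotomy lemma with the strict superadditivity actually established for this two-exponent functional. As written, your proposal reconstructs the frame of the proof but leaves its core at the level of an appeal to authority; the paper's choice---to quote \cite{lieb83} outright---is the honest alternative, and either you should do the same or the superadditivity/no-leakage step must be carried out in detail.
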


We now establish the variational structure of the HLS inequality. Define the critical threshold $(x_*,y_*)$ for $(\|u\|_{m_1}^{m_1},\|v\|_{m_2}^{m_2})$ by
\begin{align}\label{xstarstar}
x_*:=\left( \frac{m_1}{c_d C_* (m_1-1)} A^{-\frac{1}{m_2}}  \right)^{ \frac{1}{\frac{1}{m_1}+\frac{1}{m_2}-1} },\quad y_*=Ax_*, \quad A=\frac{m_1(m_2-1)}{m_2(m_1-1)},
\end{align}
and define the set
\begin{align*}
\mathcal{Y}_{(x_*,y_*)}:=\left\{ u \in L^{m_1}(\R^d),v\in L^{m_2}(\R^d)~\Big|~\|u\|_1=M_1, \|v\|_1=M_2,~ \|u\|_{m_1}^{m_1}=x_*, \|v\|_{m_2}^{m_2}=y_* \right\}.
\end{align*}
\begin{proposition}[Identification of the optimizer] \label{ELE}
Let $(U,V)$ be an optimizer of $J(u,v)$ in $\mathcal{Y}_{(x_*,y_*)}$. Then $(U,V)$ is a pair of radially symmetric, non-increasing stationary solutions to \eqref{uvsystem} and satisfies the Euler-Lagrange equations
\begin{align} 
\left\{
  \begin{array}{ll}
    \frac{m_1}{m_1-1} U^{m_1-1}=c_d \int \frac{V(y)}{|x-y|^{d-2}}dy, & x \in \R^d, \\
  \frac{m_2}{m_2-1} V^{m_2-1}=c_d \int \frac{U(y)}{|x-y|^{d-2}}dy, & x \in \R^d,
  \end{array}
\right.
\end{align}
\end{proposition}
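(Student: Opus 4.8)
The plan is to read the stated system as the Euler--Lagrange equations of the constrained maximization $\sup_{\mathcal{Y}_{(x_*,y_*)}} J$, and to show that the exact coefficients $\tfrac{m_i}{m_i-1}$ and $c_d$ emerge from the normalization built into \eqref{xstarstar}. First I would record two structural facts. Since $h(\lambda u,\mu v)=\lambda\mu\,h(u,v)$ while $\|\lambda u\|_{m_1}=\lambda\|u\|_{m_1}$ and $\|\mu v\|_{m_2}=\mu\|v\|_{m_2}$, the ratio $J$ in \eqref{Ju} is invariant under independent rescalings $u\mapsto\lambda u$, $v\mapsto\mu v$. Moreover, by the Riesz rearrangement inequality $h(u,v)\le h(u^*,v^*)$, while symmetric-decreasing rearrangement preserves every $L^p$ norm; hence the constraints defining $\mathcal{Y}_{(x_*,y_*)}$ are rearrangement-invariant, and the optimizer $(U,V)$ may be taken radially symmetric and non-increasing. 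Because $\mathcal{Y}_{(x_*,y_*)}\subset L^{m_1}\times L^{m_2}$, the value it realizes is at most $C_*$, with equality attained after rescaling the extremal of Proposition \ref{Cstar} to meet the constraints; thus $J(U,V)=C_*$.

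Next I would extract the first-order conditions using admissible variations. Fixing $v=V$ and perturbing $u=U+\varepsilon\phi$ with $\int\phi\,dx=0$ and $\int U^{m_1-1}\phi\,dx=0$ keeps $\|u\|_1$ and $\|u\|_{m_1}$ stationary, so $\delta_u J[\phi]$ reduces to a nonzero multiple of $\delta_u h[\phi]=\int\big(\int\frac{V(y)}{|x-y|^{d-2}}dy\big)\phi\,dx$. Optimality forces this to vanish for all such $\phi$, and the standard orthogonality (two-constraint Lagrange) argument yields constants $a,b$ with
\[
C_s:=c_d\int \frac{V(y)}{|x-y|^{d-2}}dy = a\,U^{m_1-1}+b \quad \text{a.e. in } \R^d .
\]
An analogous computation in $v$ gives $Z_s=c_d\int\frac{U(y)}{|x-y|^{d-2}}dy = a'\,V^{m_2-1}+b'$.

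I would then show $b=b'=0$ and pin down $a$. Since $U$ is radial, non-increasing, and $\int U^{m_1}<\infty$, we have $U(x)\to0$ as $|x|\to\infty$, while $C_s=c_d\,|x|^{2-d}\ast V\to0$ because $V\in L^1$; letting $|x|\to\infty$ in $C_s=aU^{m_1-1}+b$ forces $b=0$, and symmetrically $b'=0$. (This is exactly the vanishing of the chemical potentials $\overline{C}_1,\overline{C}_2$ proved in Proposition \ref{prop1006}.) To fix $a$, multiply $C_s=aU^{m_1-1}$ by $U$ and integrate: using $\int C_s U\,dx=c_d\,h(U,V)$ together with $\int U^{m_1}dx=x_*$ gives $a=c_d\,h(U,V)/x_*$, so it remains to verify $c_d\,h(U,V)=\tfrac{m_1}{m_1-1}x_*$.

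The punchline is the bookkeeping that turns this into the claimed coefficient. From $J(U,V)=C_*$ we have $h(U,V)=C_*\,x_*^{1/m_1}y_*^{1/m_2}$, and with $y_*=Ax_*$ this reads $c_d\,h=c_d C_* A^{1/m_2}x_*^{1/m_1+1/m_2}$. Substituting the definition \eqref{xstarstar}, which gives $x_*^{\,1/m_1+1/m_2-1}=\tfrac{m_1}{c_d C_*(m_1-1)}A^{-1/m_2}$, collapses this to $c_d\,h=\tfrac{m_1}{m_1-1}x_*$, i.e. $a=\tfrac{m_1}{m_1-1}$, and hence $\tfrac{m_1}{m_1-1}U^{m_1-1}=c_d\int\frac{V(y)}{|x-y|^{d-2}}dy$. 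The choice $y_*=Ax_*$ makes the $v$-equation follow simultaneously, since $\tfrac{m_2}{m_2-1}y_*=\tfrac{m_1}{m_1-1}x_*=c_d\,h$ forces $a'=\tfrac{m_2}{m_2-1}$. These are precisely the reduced stationary equations \eqref{eq7}, so by the analysis of Section \ref{sec31} and Theorem \ref{steadymain}, $(U,V)$ is a radially symmetric, non-increasing stationary solution of \eqref{uvsystem}. I expect the \emph{main obstacle} to be twofold: ruling out a surviving constant (chemical-potential) term in the Euler--Lagrange relation, and the normalization bookkeeping that matches the $L^{m_i}$ Lagrange multiplier to the exact factor $\tfrac{m_i}{m_i-1}$ --- which is exactly what the ad hoc-looking definition \eqref{xstarstar} is engineered to deliver.
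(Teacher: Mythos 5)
Your architecture is close to the paper's but differs in the variational setup: the paper (proof of Proposition \ref{ELE}) perturbs only within the mass constraint, using $\varphi_1=\psi_1-\frac{U}{M_1}\int\psi_1\,dx$ as in \eqref{upsi}, whereas you impose both orthogonality conditions $\int\phi\,dx=\int U^{m_1-1}\phi\,dx=0$ and consequently carry two multipliers $a,b$. Your bookkeeping for $a$ is correct: multiplying $C_s=aU^{m_1-1}$ by $U$, integrating, and combining $J(U,V)=C_*$ with \eqref{xstarstar} and $y_*=Ax_*$ does give $a=\frac{m_1}{m_1-1}$ and $a'=\frac{m_2}{m_2-1}$; this is a legitimate integral-identity alternative to the paper's pointwise identification in \eqref{VV1}. (Both you and the paper take for granted that an optimizer in $\mathcal{Y}_{(x_*,y_*)}$ attains the sharp constant $C_*$; your justification by ``rescaling the extremal to meet the constraints'' is itself shaky, since only three parameters --- two amplitudes and one common dilation --- are available against four constraints, but the paper makes the same assumption, so I do not press it.)

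The genuine gap is the elimination of $b$ (and $b'$). Because admissible perturbations must keep $U+\varepsilon\phi\ge 0$, the relation $C_s=aU^{m_1-1}+b$ is obtained only on $\Omega_1=\{U>0\}$, not a.e.\ in $\R^d$ as you wrote. Your step ``let $|x|\to\infty$'' therefore presupposes $\Omega_1=\R^d$, which is precisely what must be proved: full support is a critical-exponent phenomenon (for $1/m_1+1/m_2<(d+2)/d$ the analogous steady states \emph{are} compactly supported), so it cannot be assumed. If $\Omega_1$ were a ball, the same limit taken along its boundary would give $b=\lim C_s>0$, and your integral identity would only yield $c_d h(U,V)=ax_*+bM_1$, which no longer determines $a$. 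The parenthetical appeal to Proposition \ref{prop1006} is circular: that proposition, and the Pohozaev identity of Lemma \ref{31} on which it rests, apply to solutions of the stationary system \eqref{eq1}, i.e.\ they already presuppose the coefficient $\frac{m_1}{m_1-1}$; a pair satisfying $C_s=aU^{m_1-1}+b$ solves \eqref{eq1} only when $a=\frac{m_1}{m_1-1}$, so with $a,b$ unknown the identity and its sign analysis would have to be re-derived. The repair is to drop the $L^{m_1}$-orthogonality and argue as the paper does: with mass-only variations the putative constant equals $\frac{1}{M_1}\int G\,U\,dx$, where $G$ is the bracket in \eqref{functional}, and this vanishes identically because $\int\bigl(\int\frac{V(y)}{|x-y|^{d-2}}dy\bigr)U(x)\,dx=h(U,V)$ --- the zero-homogeneity of $J$ in $u$ kills the mass multiplier. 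The resulting constant-free proportionality then forces $\Omega_1=\R^d$ for free (its right-hand side tends to $0$ at $\partial\Omega_1$ while the left-hand side stays positive), after which your normalization computation applies verbatim.
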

\begin{proof}
We now derive the Euler-lagrange equations for $(U,V)$. We introduce
$$\Omega_1:=\left\{ x\in \R^d ~|~ U(x)>0 \right\},\quad \Omega_2:=\left\{ x\in \R^d ~|~ V(x)>0 \right\}. $$
For any $\psi_1 \in C_0^\infty(\Omega_1),$ define
\begin{align}\label{upsi}
\varphi_1(x)=\psi_1(x)-\frac{U}{M_1} \int_{\Omega_1} \psi_1 dx.
\end{align}
Then $\text{supp}(\varphi_1) \subseteq \Omega_1$ and $\int_{\Omega_1} \varphi_1 dx=0$. Besides, there exists
\begin{align*}
\varepsilon_0: = \frac{ \displaystyle \min_{y \in  \operatorname{supp}(\varphi_1)} U(y)} { \displaystyle \max_{y \in \operatorname{supp}(\varphi_1)} \left| \varphi_1 (y) \right| }  >0
\end{align*}
such that $U+\varepsilon \varphi_1 \ge 0$ in $\Omega_1$ for $0<\varepsilon<\varepsilon_0$. Computing the first variation yields
\begin{align}\label{functional}
0=&\frac{d}{d\varepsilon} \Bigg|_{\varepsilon=0} J(U+\varepsilon \varphi_1,V) \nonumber \\
=& \int \left( \|U\|_{m_1} \|V\|_{m_2} \int \frac{V(y)}{|x-y|^{d-2}} dy-h(U,V) \|V\|_{m_2} \|U\|_{m_1}^{1-m_1} U^{m_1-1} \right)\varphi_1(x) dx,
\end{align}
By the definition of $\varphi_1$ in \eqref{upsi}, equation \eqref{functional} implies that $(U,V)$ solves the following equation
\begin{align}\label{VV1}
\int \frac{V(y)}{|x-y|^{d-2}} dy & =C_* \|U\|_{m_1}^{1-m_1} \|V\|_{m_2} U^{m_1-1} \nonumber \\
&=\frac{1}{c_d} \frac{m_1}{m_1-1} U^{m_1-1},\quad \text{a.e. } x \in \R^d,
\end{align}
since $(U,V) \in \mathcal{Y}_{(x_*,y_*)}$. Similarly, by an analogous argument as in \eqref{upsi}-\eqref{VV1}, we also have
\begin{align}\label{WW240704}
 \int \frac{U(y)}{|x-y|^{d-2}} dy =\frac{1}{c_d} \frac{m_2}{m_2-1} V^{m_2-1},\quad \text{a.e. } x \in \R^d.
\end{align}
Recalling \eqref{eq7}, we conclude that $(U,V)$ is the steady state of \eqref{uvsystem}. 
\end{proof}

\section{Global existence and finite-time blow-up: proof of Theorem \ref{main}} \label{sec4}

In this section, we use the characterization of steady states and the free energy to distinguish between two cases: 
\begin{itemize}
\item $\|u_0\|_{m_1}>\|U_s\|_{m_1}$ and $\|v_0\|_{m_2}>\|V_s\|_{m_2}$ (finite-time blow-up),
\item $\|u_0\|_{m_1}<\|U_s\|_{m_1}$ and $\|v_0\|_{m_2}<\|V_s\|_{m_2}$ (global existence).
\end{itemize}  
These are treated in Sections \ref{sub41} and  \ref{sub42}, respectively. In order to prove the main results (Proposition \ref{blowup} and Proposition \ref{globalexistence}), we first establish some a priori estimates.
\begin{proposition}\label{belowabove}
Let $1/m_1+1/m_2=(d+2)/d$. Assume $(u,v)$ is a solution to \eqref{uvsystem}. Under assumption \eqref{initialdata} and
\begin{align}\label{Fu0}
 F(u_0,v_0)<F(U_s,V_s),
\end{align}
the following results hold:
\begin{enumerate}
  \item[\textbf{(i)}] If
  \begin{align}\label{xiaoyum1}
   \|u_0\|_{m_1}<\|U_s\|_{m_1} \text{  and  } \|v_0\|_{m_2}<\|V_s\|_{m_2},
  \end{align}
  then there exists a constant $0<\mu_1<1$ such that the corresponding solution $(u,v)$ satisfies that for any $t>0,$
     \begin{align}\label{xiaoyum}
      \|u(\cdot,t)\|_{m_1}< \mu_1  \|U_s\|_{m_1} \text{  and  } \|v(\cdot,t)\|_{m_2}< \mu_1 \|V_s\|_{m_2}.
    \end{align}
  \item[\textbf{(ii)}] If
  \begin{align}\label{dayum1}
  \|u_0\|_{m_1}>\|U_s\|_{m_1} \text{  and  } \|v_0\|_{m_2}>\|V_s\|_{m_2},
  \end{align}
  then there exists a constant $\mu_2>1$ such that the corresponding solution $(u,v)$ satisfies that for any $t>0,$
\begin{align}\label{dayum}
  \|u(\cdot,t)\|_{m_1}> \mu_2  \|U_s\|_{m_1} \text{  and  } \|v(\cdot,t)\|_{m_2}> \mu_2 \|V_s\|_{m_2}.
\end{align}
\end{enumerate}
\end{proposition}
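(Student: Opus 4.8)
The plan is to use the free energy as a barrier. The key insight is that $F$ is non-increasing along the flow (by the dissipation principle), so $F(u(\cdot,t),v(\cdot,t)) \le F(u_0,v_0) < F(U_s,V_s)$ for all $t$. The strategy is then to express $F$ in terms of the two $L^{m_i}$ norms together with the nonlocal term $h(u,v)$, bound $h(u,v)$ from above using the HLS inequality (Proposition \ref{Cstar}), and thereby reduce everything to a scalar inequality in the variables $X:=\|u\|_{m_1}^{m_1}$ and $Y:=\|v\|_{m_2}^{m_2}$. Since the norms are conserved under the scaling that leaves $F$ invariant, the stationary values $\|U_s\|_{m_1}, \|V_s\|_{m_2}$ should appear as critical points of the resulting scalar functional, and $F(U_s,V_s)$ as its critical value.

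Concretely, I would first write, using $h(u,v) = J(u,v)\,\|u\|_{m_1}\|v\|_{m_2} \le C_* \|u\|_{m_1}\|v\|_{m_2}$,
\begin{align*}
F(u,v) \ge \frac{1}{m_1-1}\|u\|_{m_1}^{m_1} + \frac{1}{m_2-1}\|v\|_{m_2}^{m_2} - c_d C_* \|u\|_{m_1}\|v\|_{m_2}.
\end{align*}
Setting $a := \|u\|_{m_1}$ and $b := \|v\|_{m_2}$, define the lower-bound function
\begin{align*}
G(a,b) := \frac{1}{m_1-1}a^{m_1} + \frac{1}{m_2-1}b^{m_2} - c_d C_* \, a\,b.
\end{align*}
The next step is to analyze $G$: compute $\nabla G = 0$ and check that $(a,b) = (\|U_s\|_{m_1}, \|V_s\|_{m_2})$ is the unique nontrivial critical point, using the Euler--Lagrange relations from Proposition \ref{ELE} (which give $\tfrac{m_1}{m_1-1}U_s^{m_1-1} = c_d(\,|x|^{2-d}\ast V_s)$, hence after integration the identities \eqref{UsVs}) together with the sharp-constant normalization \eqref{xstarstar}. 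One verifies $G(\|U_s\|_{m_1},\|V_s\|_{m_2}) = F(U_s,V_s)$ via \eqref{FUsVs}, and that this critical point is a saddle of mountain-pass type: $G$ grows like the $a^{m_1}, b^{m_2}$ terms for large norms but the product term $-c_dC_*ab$ makes the origin a local region where $G$ is small and the saddle separates the subcritical ``valley'' (small norms) from the supercritical region.

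The geometric heart of the argument is the following continuity/connectedness dichotomy. Because $t\mapsto(\|u(\cdot,t)\|_{m_1},\|v(\cdot,t)\|_{m_2})$ is continuous and $G(a,b)\le F(u,v)\le F(u_0,v_0)<F(U_s,V_s)$ holds at every time, the trajectory is confined to the sublevel set $\{G < F(U_s,V_s)\}$, which is disconnected into a bounded component around the origin and an unbounded component. In case \textbf{(i)} the initial data lie in the bounded component (small norms), so the trajectory stays there, giving a uniform bound $\|u\|_{m_1}<\mu_1\|U_s\|_{m_1}$ for some $\mu_1<1$ determined by the geometry of the level set. In case \textbf{(ii)} the data lie in the unbounded component (large norms), and the trajectory is trapped strictly above the saddle, yielding $\|u\|_{m_1}>\mu_2\|U_s\|_{m_1}$ for some $\mu_2>1$.

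The main obstacle is making the two-variable barrier argument rigorous, since confinement to a single component of $\{G<F(U_s,V_s)\}$ requires showing that the sublevel set genuinely separates near the saddle and that both norm components move together (one cannot have $\|u\|_{m_1}$ drop while $\|v\|_{m_2}$ stays large without crossing the barrier). The ratio constraint $y_* = A x_*$ from \eqref{xstarstar}, which fixes the proportion $\|u\|_{m_1}^{m_1}:\|v\|_{m_2}^{m_2}$ at the optimizer, is what ties the two norms together and should reduce $G$ effectively to a one-variable function along the relevant ray; the technical care is in controlling deviations from that ray. The existence of the strict gaps $\mu_1<1<\mu_2$ (rather than mere non-strict inequalities) follows from the strict inequality $F(u_0,v_0)<F(U_s,V_s)$, which keeps the trajectory a fixed distance from the saddle value.
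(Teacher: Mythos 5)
Your proposal is correct and follows essentially the same route as the paper: a lower bound on $F$ via the sharp HLS constant $C_*$, reduction to a scalar barrier function (the paper's $g(x,y)$ equals your $G(a,b)$ under $x=a^{m_1}$, $y=b^{m_2}$) whose nontrivial critical point is $\left(\|U_s\|_{m_1}^{m_1},\|V_s\|_{m_2}^{m_2}\right)$ with critical value $F(U_s,V_s)$, followed by energy dissipation plus a continuity/barrier argument in the norm plane, with the strict gap $F(u_0,v_0)<\delta F(U_s,V_s)$, $0<\delta<1$, yielding $\mu_1<1<\mu_2$. The only notable difference is in how the separation is justified: you invoke disconnection of the sublevel set $\{G<F(U_s,V_s)\}$ into a component with small norms and one with large norms, while the paper asserts monotonicity of $g$ in each variable on the corresponding quadrants; your topological formulation is, if anything, the more robust version of the same step.
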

\begin{proof}
First, by setting
\begin{align*}
f=u(x), h=v(y), \beta=d-2, q=m_1, r=m_2
\end{align*}
in the HLS inequality \eqref{fh} and applying Proposition \ref{Cstar}, we infer from the expression of $F(u,v)$ that
\begin{align}\label{111}
F(u,v) &=\frac{1}{m_1-1} \int u^{m_1} dx+ \frac{1}{m_2-1} \int v^{m_2}dx-c_d \iint \frac{u(x)v(y)}{|x-y|^{d-2}}dxdy \nonumber\\
& \ge \frac{1}{m_1-1} \int u^{m_1} dx+ \frac{1}{m_2-1} \int v^{m_2}dx-c_d C_* \|u\|_{m_1}\|v\|_{m_2},
\end{align}
On the other hand, applying \eqref{UsVsm1m2}, it follows from Proposition \ref{ELE} that
\begin{align}\label{222}
F(U_s,V_s) &=\frac{1}{m_1-1} \int U_s^{m_1} dx+ \frac{1}{m_2-1} \int V_s^{m_2}dx-c_d C_* \|U_s\|_{m_1}\|V_s\|_{m_2} \nonumber \\
&=\frac{m_1+m_2}{m_2(m_1-1)} \int U_s^{m_1} dx-c_d C_* \left( \frac{m_1(m_2-1)}{m_2 (m_1-1)} \right)^{\frac{1}{m_2}} \|U_s\|_{m_1}^{\frac{m_1+m_2}{m_2}}.
\end{align}
Define the auxiliary function
\[
g(x,y)=\frac{1}{m_1-1} x+\frac{1}{m_2-1}y-c_d C_* x^{\frac{1}{m_1}} y^{\frac{1}{m_2}}.
\]
Since $F(u_0,v_0)<F(U_s,V_s),$ there exists $0<\delta<1$ such that
\begin{align}\label{delta}
F(u_0,v_0)<\delta F(U_s,V_s).
\end{align}
By the monotonicity of $F(u,v)$ in time, we have
\begin{align}\label{444}
g\left( \|u\|_{m_1}^{m_1},\|v\|_{m_2}^{m_2} \right) \le F(u,v) \le F(u_0,v_0)<\delta F(U_s,V_s)=\delta g\left( \|U_s\|_{m_1}^{m_1},\|V_s\|_{m_2}^{m_2} \right).
\end{align}
Therefore, in the region where $x > \|U_s\|_{m_1}^{m_1}$ and $y > \|V_s\|_{m_2}^{m_2}$, the function $g(x, y)$ is strictly decreasing in each variable. Hence, the inequality $g(x, y) < \delta g(x_*, y_*)$ with $0 < \delta < 1$ implies that $x > \mu_2 x_*$ and $y > \mu_2 y_*$ for some $\mu_2 > 1$ depending on $\delta$. Consequently, if $\|u_0\|_{m_1} > \|U_s\|_{m_1}$ and $\|v_0\|_{m_2} > \|V_s\|_{m_2}$, then
\begin{align}\label{555}
\|u(\cdot,t)\|_{m_1} > \mu_2  \|U_s\|_{m_1} \quad \text{and} \quad \|v(\cdot,t)\|_{m_2} > \mu_2 \|V_s\|_{m_2}
\end{align}
for all $t>0$. Conversely, in the region where $x < \|U_s\|_{m_1}^{m_1}$ and $y < \|V_s\|_{m_2}^{m_2}$, the function $g(x, y)$ is strictly increasing in each variable. Therefore, the inequality $g(x, y) < \delta g(x_*, y_*)$ implies that $x < \mu_1 x_*$ and $y < \mu_1 y_*$ for some $0<\mu_1 < 1$ depending on $\delta$. Thus, if $\|u_0\|_{m_1} < \|U_s\|_{m_1}$ and $\|v_0\|_{m_2} < \|V_s\|_{m_2}$, then
\begin{align}\label{666}
\|u(\cdot,t)\|_{m_1} < \mu_1  \|U_s\|_{m_1} \quad \text{and} \quad \|v(\cdot,t)\|_{m_2} < \mu_1 \|V_s\|_{m_2}
\end{align}
for all $t > 0$. This completes the proof. 
\end{proof}

\subsection{Proof of $\textbf{(\emph{ii})}$ of Theorem \ref{main} (finite-time blow-up)} \label{sub41}

In this subsection, we prove the finite-time blow-up result in Theorem \ref{main} for large initial data. We employ a standard argument based on the evolution of the second moment of solutions, following the approach in \cite{JL92}. Define the second moment as
\begin{align}
m_2(t):=\int \left(|x|^2 u(x,t)+|x|^2 v(x,t) \right) dx.
\end{align}
\begin{proposition}[Finite time blow-up]\label{blowup}
Let $1<m_1<2-2/d$, $1<m_2<2-2/d$ and $1/m_1+1/m_2=(d+2)/d$. Assume that $(u,v)$ is a weak solution obtained in Lemma \ref{ueps} on $[0,T_w)$. Under assumptions \eqref{initialdata}, \eqref{Fu0} and \eqref{dayum1}, we have $T_w<\infty$ and the solution to \eqref{uvsystem} blows up in finite time in the sense that 
\begin{align*}
\displaystyle \limsup_{t \to T_w} \left(\|u(\cdot,t)\|_{L^\infty(\R^d)}+\|v(\cdot,t)\|_{L^\infty(\R^d)} \right) =\infty.
\end{align*}
\end{proposition}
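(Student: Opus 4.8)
The plan is to run the classical second-moment (virial) argument of \cite{JL92}, adapted to the two-species porous-medium setting, and to close it by combining the trapping estimate of Proposition \ref{belowabove} with the Pohozaev identity of Lemma \ref{31} and the dissipation of $F$. First I would derive the evolution law for $m_2(t)$. Testing the two equations of \eqref{uvsystem} against $|x|^2$ — rigorously, against the cut-off $\psi_R$ of Lemma \ref{31} and letting $R\to\infty$ exactly as there — the diffusion terms produce $\int \Delta(|x|^2)u^{m_1}\,dx=2d\int u^{m_1}\,dx$ and likewise $2d\int v^{m_2}\,dx$, while the drift terms give $2\int u\,x\cdot\nabla c\,dx$ and $2\int v\,x\cdot\nabla z\,dx$. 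Writing $\nabla c=-c_d(d-2)\int \frac{(x-y)}{|x-y|^d}v(y)\,dy$ and symmetrizing the two cross terms through the interchange $x\leftrightarrow y$, the elementary identity $x\cdot(x-y)+y\cdot(y-x)=|x-y|^2$ collapses them into a single multiple of the interaction energy, yielding
\begin{align*}
\frac{d}{dt}m_2(t)=2d\int u^{m_1}\,dx+2d\int v^{m_2}\,dx-2c_d(d-2)\iint\frac{u(x)v(y)}{|x-y|^{d-2}}\,dx\,dy.
\end{align*}

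Next I would eliminate the nonlocal term in favor of the free energy. Writing $I_1=\int u^{m_1}\,dx$, $I_2=\int v^{m_2}\,dx$ and $H=c_d\iint\frac{uv}{|x-y|^{d-2}}\,dx\,dy$, definition \eqref{Fuv} gives $H=\frac{I_1}{m_1-1}+\frac{I_2}{m_2-1}-F(u,v)$, so that
\begin{align*}
\frac{d}{dt}m_2(t)=-2a_1 I_1-2a_2 I_2+2(d-2)F(u,v),\qquad a_i:=\frac{d-2}{m_i-1}-d.
\end{align*}
The hypothesis $1<m_i<2-2/d$ forces $m_i-1<(d-2)/d$, hence $a_1,a_2>0$. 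Evaluating $F$ on $(U_s,V_s)$ via \eqref{Fuv} and eliminating the stationary interaction energy through Lemma \ref{31} (which gives $H_s=\frac{d}{d-2}(I_1^s+I_2^s)$, where $I_1^s=\|U_s\|_{m_1}^{m_1}$, $I_2^s=\|V_s\|_{m_2}^{m_2}$) yields the key balance $a_1 I_1^s+a_2 I_2^s=(d-2)F(U_s,V_s)$; in particular the virial right-hand side vanishes on the steady state.

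Then I would combine these facts. Monotonicity of $F$ along the flow and \eqref{Fu0} give $F(u,v)\le F(u_0,v_0)<F(U_s,V_s)$, whence
\begin{align*}
\frac{d}{dt}m_2(t)<-2a_1 I_1-2a_2 I_2+2\bigl(a_1 I_1^s+a_2 I_2^s\bigr)=-2a_1(I_1-I_1^s)-2a_2(I_2-I_2^s).
\end{align*}
Proposition \ref{belowabove}(ii) supplies, via \eqref{dayum}, the uniform-in-time bounds $I_1>\mu_2^{m_1}I_1^s$ and $I_2>\mu_2^{m_2}I_2^s$ with $\mu_2>1$, so that $\frac{d}{dt}m_2(t)\le -C_0$ with $C_0:=2a_1(\mu_2^{m_1}-1)I_1^s+2a_2(\mu_2^{m_2}-1)I_2^s>0$ a fixed constant. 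Integrating yields $0\le m_2(t)\le m_2(0)-C_0 t$, which is impossible once $t>m_2(0)/C_0$; hence the maximal existence time satisfies $T_w\le m_2(0)/C_0<\infty$, and the blow-up criterion of Lemma \ref{ueps} forces $\limsup_{t\to T_w}\bigl(\|u(\cdot,t)\|_{L^\infty(\R^d)}+\|v(\cdot,t)\|_{L^\infty(\R^d)}\bigr)=\infty$.

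I expect the real obstacle to be analytic rather than algebraic: rigorously justifying the virial identity for the degenerate weak solution. One must ensure that $m_2(t)$ remains finite and is absolutely continuous in time, and that the cut-off computation passes to the limit $R\to\infty$. The cleanest route is to run the identity at the level of the regularized problem \eqref{uveps} with the mollified kernel $R_\varepsilon$, control the error produced by the mollification (which contributes a term with $\nabla R_\varepsilon$ in place of the singular kernel), and then use the convergence \eqref{conver1} to recover the identity for $(u,v)$. Everything after the virial identity is elementary and follows the chain above.
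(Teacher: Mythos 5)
Your proposal is correct and follows essentially the same route as the paper: the second-moment (virial) identity, combined with the trapping bound \eqref{dayum} from Proposition \ref{belowabove}(ii) and the monotonicity of $F$ together with \eqref{Fu0}, yields $\frac{d}{dt}m_2(t)\le -C_0<0$ for a fixed constant $C_0>0$. The only minor differences are cosmetic: the paper eliminates the stationary quantities via the Euler--Lagrange identities \eqref{UsVs} and \eqref{FUsVs} and computes the resulting negative constant explicitly, whereas you use the Pohozaev identity of Lemma \ref{31} to get the equivalent balance $a_1 I_1^s+a_2 I_2^s=(d-2)F(U_s,V_s)$; and the paper closes by interpolating $M_1+M_2\le C\, m_2(t)^{d/(d+2)}\left(\|u\|_{L^\infty(\R^d)}+\|v\|_{L^\infty(\R^d)}\right)^{2/(d+2)}$ to force $L^\infty$ blow-up as $m_2(t)\to 0$, while you reach the same conclusion from the nonnegativity of $m_2$ and the blow-up criterion of Lemma \ref{ueps} --- both closings are valid.
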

\begin{proof}
We present formal computations here, the passage to the limit from the approximated problem \eqref{uveps} can be carried out similarly as in \cite{su06,suku06}. The key tool is the time evolution of the second moment. Integrating by parts in \eqref{uvsystem} and leveraging mass conservation, we compute
\begin{align*}
\frac{d}{dt} m_2(t) &=2d \int u^{m_1} dx+2d \int v^{m_2} dx+2 c_d (2-d) \iint u(x)v(y) \frac{x \cdot (x-y)}{|x-y|^d} dxdy \\
&~~+2 c_d (2-d) \iint u(y)v(x) \frac{x \cdot (x-y)}{|x-y|^d} dxdy \\
&=2d \int u^{m_1} dx+2d \int v^{m_2} dx-2(d-2)c_d \iint \frac{u(x)u(y)}{|x-y|^{d-2}} dxdy \\
&=\left( 2d-\frac{2(d-2)}{m_1-1} \right) \int u^{m_1} dx+\left(   2d-\frac{2(d-2)}{m_2-1}\right) \int v^{m_2} dx+2(d-2) F(u,v).
\end{align*}
By Proposition \ref{belowabove}, assumptions \eqref{Fu0} and \eqref{dayum1} imply that \eqref{dayum} holds for some $\mu_2>1$. We use the fact that $F(u,v)$ is non-increasing in time to obtain
\begin{align}
\frac{d}{dt} m_2(t) & \le \left( 2d-\frac{2(d-2)}{m_1-1} \right) \mu_2 \int U_s^{m_1} dx+\left( 2d-\frac{2(d-2)}{m_2-1}\right) \mu_2 \int V_s^{m_2} dx+2(d-2) F(u_0,v_0) \nonumber \\
&< \left( 2d-\frac{2(d-2)}{m_1-1} \right) \mu_2 \int U_s^{m_1} dx+\left( 2d-\frac{2(d-2)}{m_2-1}\right) \mu_2 \int V_s^{m_2} dx+2(d-2) F(U_s,V_s) \nonumber \\
&=\mu_2 \left( 2d-\frac{2(d-2)}{m_1-1}+\left( 2d-\frac{2(d-2)}{m_2-1} \right)\frac{m_1(m_2-1)}{m_2(m_1-1)}    \right) \int U_s^{m_1} dx \nonumber \\
&~~~+2(d-2) \left( \frac{m_1+m_2-m_1m_2}{(m_1-1)m_2} \right) \int U_s^{m_1} dx \nonumber \\
&=-4\frac{(d-2)m_1}{d(m_1-1)} (\mu_2-1)<0,
\end{align}
where we have used identities \eqref{UsVs} and \eqref{FUsVs}, together with the conditions $1<m_1<2-2/d$, $1<m_2<2-2/d$ and $1/m_1+1/m_2=(d+2)/d$. Consequently, there exists a finite time $T>0$ such that $\displaystyle \lim_{t \to T} m_2(t)=0$.

To connect the vanishing of the second moment to $L^\infty$-blow-up, we apply H\"{o}lder's inequality to get
\begin{align}
\int (u+v) dx &=\int_{|x| \le R} (u+v) dx+\int_{|x|> R} (u+v) dx \nonumber \\
& \le C(d) \left(\|u\|_{L^\infty(\R^d)}+\|v\|_{L^\infty(\R^d)}\right) R^d+\frac{1}{R^2} \int (|x|^2 u+|x|^2 v)) dx.
\end{align}
Choosing $R=\left( \frac{C m_2(t)}{\|u\|_{L^\infty(\R^d)}+\|v\|_{L^\infty(\R^d)}}\right)^{1/(d+2)}$ results in 
\begin{align}
M_1+M_2 \le C m_2(t)^{\frac{d}{d+2}} \left(\|u\|_{L^\infty(\R^d)}+\|v\|_{L^\infty(\R^d)}\right)^{\frac{2}{d+2}},
\end{align}
where the constant $C$ depends on $d$ and the masses. Consequently, 
\begin{align}
\limsup\limits_{t \to T} (\|u\|_{L^\infty(\R^d)}+v\|_{L^\infty(\R^d)}) \ge \limsup\limits_{t \to T} \frac{ (M_1+M_2)^{(d+2)/2}}{C m_2(t)^{d/2}}=\infty.
\end{align}
This completes the proof.
\end{proof}

\subsection{Proof of $\textbf{\emph{(i)}}$ of Theorem \ref{main} (global existence)} \label{sub42}

A direct consequence of the uniform bounds in \eqref{xiaoyum} and the local existence theory in Lemma \ref{ueps} is the following global existence result. 

\begin{proposition}[Global existence]\label{globalexistence}
Let $1<m_1<2-2/d$, $1<m_2<2-2/d$ and $1/m_1+1/m_2=(d+2)/d$. Assume $(u,v)$ is the weak solution obtained in Lemma \ref{ueps} on $[0,T_w)$. Under assumptions \eqref{initialdata}, \eqref{Fu0} and \eqref{xiaoyum1}, this weak solution exists globally in time, i.e., $T_w = \infty$. 
\end{proposition}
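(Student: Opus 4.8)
The plan is to show that the hypotheses preclude the blow-up alternative in Lemma~\ref{ueps}, i.e. that $\|u(\cdot,t)\|_{L^\infty(\R^d)}+\|v(\cdot,t)\|_{L^\infty(\R^d)}$ stays finite up to $T_w$, whence $T_w=\infty$. By the proof of Lemma~\ref{ueps} (Steps~1--2), it suffices to produce a bound on $\|u(\cdot,t)\|_{r}+\|v(\cdot,t)\|_{r}$ for a single exponent $r>p_0:=\max\big(d(2-m_1)/2,\,d(2-m_2)/2\big)$ that is \emph{uniform in time} on $[0,T_w)$; the Moser iteration of Step~2 then upgrades this to a time-uniform $L^\infty$ bound. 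First I would record that, by Proposition~\ref{belowabove}\,(i), assumption \eqref{xiaoyum1} propagates to the time-uniform control \eqref{xiaoyum}, namely $\|u(\cdot,t)\|_{m_1}<\mu_1\|U_s\|_{m_1}$ and $\|v(\cdot,t)\|_{m_2}<\mu_1\|V_s\|_{m_2}$ with a fixed $\mu_1<1$, so that both scale-invariant critical norms remain \emph{strictly below} the ground-state threshold for all time. Note that this step is essential because $m_1$ or $m_2$ may itself lie below $p_0$, so the critical-norm control alone does not start the bootstrap.

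Next I would revisit the $L^r$ differential inequality. Testing \eqref{uveps} with $ru_\varepsilon^{r-1}$ and $rv_\varepsilon^{r-1}$ gives, as in \eqref{eq22} and after passing to the limit $\varepsilon\to0$ (justified as in \cite{su06,suku06}),
\begin{align*}
&\frac{d}{dt}\int(u^r+v^r)\,dx+\frac{4m_1r(r-1)}{(m_1+r-1)^2}\Big\|\nabla u^{\frac{m_1+r-1}{2}}\Big\|_2^2+\frac{4m_2r(r-1)}{(m_2+r-1)^2}\Big\|\nabla v^{\frac{m_2+r-1}{2}}\Big\|_2^2\\
&\qquad=(r-1)\int(u^rv+v^ru)\,dx.
\end{align*}
The decisive observation is that, under $1/m_1+1/m_2=(d+2)/d$ and the invariance $u\mapsto\lambda^{d/m_1}u(\lambda\cdot)$, $v\mapsto\lambda^{d/m_2}v(\lambda\cdot)$ of \eqref{uvscale}, the chemotactic term $\int u^r v\,dx$ has \emph{exactly} the same scaling exponent as the dissipation $\|\nabla u^{(m_1+r-1)/2}\|_2^2$ (one checks the two exponents $rd/m_1+d/m_2-d$ and $d(r-1)/m_1+2$ coincide), and symmetrically for $\int v^r u\,dx$ and the $v$-dissipation. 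There is no scaling gap, which is the hallmark of the energy-critical regime. I would therefore establish an interpolation inequality bounding the chemotactic term by the two dissipations times \emph{positive} powers of the invariant norms $\|u\|_{m_1},\|v\|_{m_2}$ (coupling the Sobolev embedding $\dot H^1\hookrightarrow L^{2d/(d-2)}$ applied to $u^{(m_1+r-1)/2}$, Hölder, and interpolation against $L^{m_1}$, $L^{m_2}$), with the relevant constant pinned by the sharp HLS constant $C_*$ of Proposition~\ref{Cstar} and the stationary identity \eqref{UsVs}. Feeding in the strict sub-threshold bound $\mu_1<1$ then renders the right-hand side absorbable into the dissipation, yielding
\[
\frac{d}{dt}\int(u^r+v^r)\,dx\le C\,(\text{lower-order terms}),
\]
and hence the sought time-uniform $L^r$ bound controlled by $\int(u_0^r+v_0^r)\,dx$.

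Finally, with a time-uniform $L^r$ bound for some $r>p_0$, the Moser iteration of Step~2 of Lemma~\ref{ueps} produces a time-uniform $L^\infty$ bound depending only on the data and this $L^r$ bound; the blow-up alternative therefore cannot occur and $T_w=\infty$. The main obstacle is precisely the absorption step. Because $m_1\ne m_2$ in general, the \emph{decoupled} Gagliardo--Nirenberg inequality used for local existence (which bounds the right-hand side by $\int(u^{r+1}+v^{r+1})\,dx$) is \emph{not} scale-invariant — it is critical only when $m_1=m_2=2d/(d+2)$ — and so yields only finite-time control. The delicate point is thus to keep the genuine coupling between the two diffusion dissipations and both critical norms, and to verify that the resulting absorption coefficient is strictly below $1$ uniformly in time; this is exactly the room furnished by the strict inequality $\mu_1<1$ together with the sharp HLS/ground-state threshold.
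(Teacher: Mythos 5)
Your overall skeleton --- propagating the critical norms via Proposition \ref{belowabove}, a differential inequality for $\int(u^r+v^r)\,dx$, absorption of the chemotactic terms into the dissipation, Moser iteration, and then the blow-up alternative of Lemma \ref{ueps} --- is the same as the paper's. The gap is in the pivotal absorption step. You propose an inequality of the form
\begin{align*}
\int u^r v\,dx \;\le\; C\,\|u\|_{m_1}^{\alpha}\,\|v\|_{m_2}^{\beta}\,
\left\|\nabla u^{\frac{m_1+r-1}{2}}\right\|_{2}^{a}\,
\left\|\nabla v^{\frac{m_2+r-1}{2}}\right\|_{2}^{b},
\qquad \alpha,\beta\ge 0,\quad a+b\le 2,
\end{align*}
but no such inequality exists when $m_1\neq m_2$. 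Since the prefactors are invariant under the scaling $u\mapsto\lambda^{d/m_1}u(\lambda\cdot)$, $v\mapsto\lambda^{d/m_2}v(\lambda\cdot)$ of \eqref{uvscale}, validity for all functions forces exact matching of scaling weights: $\tfrac a2 S_u+\tfrac b2 S_v=S_u$, where $S_u=\tfrac{(r-1)d}{m_1}+2$ and $S_v=\tfrac{(r-1)d}{m_2}+2$ are the weights of the two dissipations (your own computation shows $\int u^rv\,dx$ scales with weight $S_u$). If $m_1<m_2$ then $S_u>S_v$, and the matching condition together with $a+b\le 2$ forces $b=0$, $a=2$; testing homogeneity in $u$ alone (replace $u$ by $\mu u$) then gives $r=\alpha+(m_1+r-1)$, i.e.\ $\alpha=1-m_1<0$. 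So the prefactor must carry a \emph{negative} power of $\|u\|_{m_1}$ --- smallness of the critical norm works against you, not for you --- and in fact the underlying H\"older/Sobolev chain is not even available. If $m_1>m_2$ the same obstruction hits $\int v^ru\,dx$. The root cause is your choice of a \emph{common} exponent $r$ for both species: the terms $\int u^rv\,dx$ and $\int v^ru\,dx$ then carry different scaling weights and cannot both be matched to the available dissipations. A second, independent gap: even where the coupled inequality is scale-consistent (the symmetric case $m_1=m_2$, or balanced exponents $\tfrac{p-1}{m_1}=\tfrac{q-1}{m_2}$, which land exactly on $a+b=2$), your claim that its constant is ``pinned by'' the sharp HLS constant $C_*$ and \eqref{UsVs}, so that $\mu_1<1$ yields absorption, is a Weinstein-type sharp-constant identification that you assert but do not prove; moreover, comparing the factor $(r-1)$ with the dissipation coefficient $\tfrac{4m_1r(r-1)}{(m_1+r-1)^2}$ shows that any fixed smallness can absorb only finitely many exponents $r$, so even then one must first close a single supercritical norm and bootstrap.

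The paper's route is genuinely different at this point: it never attempts a critical (sharp-constant) absorption. It runs the estimate with two \emph{distinct} exponents $p$ and $q$ for $u$ and $v$, linked by a compatibility relation, and interpolates against strictly subcritical norms $\overline{m}_1<m_1$, $\overline{m}_2<m_2$, which are bounded uniformly in time by mass conservation together with \eqref{xiaoyum} (via $\|u\|_{\overline{m}_1}\le\|u\|_1^{1-\theta}\|u\|_{m_1}^{\theta}$); the chemotactic terms are then absorbed by Young's inequality with additive constants as in \eqref{upv1}--\eqref{vqu1}, so only \emph{boundedness} of the critical norms is used, never the strict inequality $\mu_1<1$ nor any sharp constant. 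You should be aware, however, that the scaling bookkeeping that defeats your inequality equally constrains any choice of $(p,q,\overline{m}_1,\overline{m}_2)$ in conditions \eqref{pm1}--\eqref{qm2}: prefactor norms below the critical ones have negative scaling weight and therefore push the admissible dissipation exponents up, not down, so the exponent count in that step must be checked with the same care. In short, your strategy is not a repair-by-rewording away from being correct: with a common exponent $r$ it is provably impossible, and with balanced exponents it requires a sharp coupled Gagliardo--Nirenberg inequality (with extremals given by the stationary solutions) that neither you nor the cited results supply.
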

\begin{proof}
First, Proposition \ref{belowabove} together with assumptions \eqref{Fu0} and \eqref{xiaoyum1} implies the uniform bounds in \eqref{xiaoyum}. That is, $\|u(\cdot,t)\|_{m_1}$ and $\|v(\cdot,t)\|_{m_2}$ are uniformly bounded for all $t>0$. We will show that this implies uniform boundedness of $\|u\|_p$ and $\|v\|_q$ for all $p,q>1$. Then, using the iteration method, we prove that the solution $(u,v)$ is uniformly bounded in time, which ensures that the local weak solution $(u,v)$ from Lemma \ref{ueps} can be extended globally to $[0, \infty)$. 

{\it\textbf{Step 1}} ($L^p$ and $L^q$ estimates for $p>1$, $q>1$) \quad 
We derive a priori estimates for solutions to \eqref{uvsystem}, following a similar regularity argument as in \cite{JK22}. Multiplying \eqref{uvsystem}$_1$ by $p u^{p-1}$ and \eqref{uvsystem}$_2$ by $q v^{q-1}$, we obtain
\begin{align}
 \frac{d}{dt} \int u^p dx+\frac{4m_1 p(p-1)}{(p+m_1-1)^2} \int \left| \nabla u^{\frac{p+m_1-1}{2}} \right|^2 dx 
=  (p-1) \int u^p v dx,
\end{align}
and 
\begin{align}
 \frac{d}{dt} \int v^q dx+\frac{4m_2 q(q-1)}{(q+m_2-1)^2} \int \left| \nabla v^{\frac{q+m_2-1}{2}} \right|^2 dx 
=  (q-1) \int v^q u dx.
\end{align}
Summing these two equations yields
\begin{align}\label{upvq}
 &\frac{d}{dt} \int u^p dx+\frac{d}{dt} \int v^q dx+\frac{4m_1 p(p-1)}{(p+m_1-1)^2} \int \left| \nabla u^{\frac{p+m_1-1}{2}} \right|^2 dx 
 +\frac{4m_2 q(q-1)}{(q+m_2-1)^2} \int \left| \nabla v^{\frac{q+m_2-1}{2}} \right|^2 dx \nonumber \\
&= (p-1) \int u^p v dx+(q-1) \int v^q u dx.
\end{align}

We now estimate $\int u^p vdx$ and $\int v^q u dx$. By H\"{o}lder's inequality, one has
\begin{align}
\int u^p v dx \le \|u\|_{p k_1}^p \|v\|_{k_1'}
\end{align}
for $1/k_1+1/k_1'=1.$ Applying the Sobolev inequality and H\"{o}lder's inequality with 
\begin{align}\label{prange}
\overline{m}_1<p k_1<\frac{(p+m_1-1)d}{d-2},\quad k_1<\frac{d}{d-2},
\end{align}
we discover
\begin{align}\label{up}
\|u\|_{p k_1}^p & \le \|u\|_{\overline{m}_1}^{p(1-\theta_1)} \|u\|_{\frac{(p+m_1-1)d}{d-2}}^{p \theta_1} \nonumber\\
& = \|u\|_{\overline{m}_1}^{p(1-\theta_1)} \left \|u^{\frac{p+m_1-1}{2}} \right\|_{\frac{2d}{d-2}}^{\frac{2 p \theta_1}{p+m_1-1}} \nonumber \\
& \le C(d) \|u\|_{\overline{m}_1}^{p(1-\theta_1)} \left \| \nabla u^{\frac{p+m_1-1}{2}} \right\|_{2}^{\frac{2 p \theta_1}{p+m_1-1}},
\end{align}
where the parameters satisfy 
\begin{align*}
\theta_1=\frac{\frac{1}{\overline{m}_1}-\frac{1}{pk_1}}{\frac{1}{\overline{m}_1}-\frac{d-2}{(p+m_1-1)d}} \in (0,1), \quad
\frac{2 p \theta_1}{p+m_1-1}=\frac{\frac{p}{\overline{m}_1}-\frac{1}{k_1}}{\frac{p+m_1-1}{2 \overline{m}_1}-\frac{d-2}{2d}}.
\end{align*}
Moreover, for $\overline{m}_2<\frac{d}{2}<k_1'<\frac{(q+m_2-1)d}{d-2},$ H\"{o}lder's inequality yields 
\begin{align}\label{v}
\|v\|_{k_1'} & \le \|v\|_{\overline{m}_2}^{1-\theta_2} \|v\|_{\frac{(q+m_2-1)d}{d-2}}^{\theta_2} \nonumber \\
& =\|v\|_{\overline{m}_2}^{1-\theta_2} \|v^{\frac{q+m_2-1}{2}}\|_{\frac{2d}{d-2}}^{\frac{2 \theta_2}{q+m_2-1}} \nonumber \\
& \le C(d) \|v\|_{\overline{m}_2}^{1-\theta_2} \left\|\nabla v^{\frac{q+m_2-1}{2}} \right\|_{2}^{\frac{2 \theta_2}{q+m_2-1}}, 
\end{align}
where the parameters are given by
\begin{align*}
\theta_2=\frac{\frac{1}{\overline{m}_2}-\frac{1}{k_1'}}{\frac{1}{\overline{m}_2}-\frac{d-2}{(q+m_2-1)d}} \in (0,1), \quad 
\frac{2 \theta_2}{q+m_2-1}=\frac{\frac{1}{\overline{m}_2}-\frac{1}{k_1'}}{\frac{q+m_2-1}{2 \overline{m}_2}-\frac{d-2}{d}}.
\end{align*}
Collecting \eqref{up} and \eqref{v} together, we arrive at
\begin{align}\label{upv}
\int u^p v dx \le C(d) \|u\|_{\overline{m}_1}^{p(1-\theta_1)} \left \| \nabla u^{\frac{p+m_1-1}{2}} \right\|_{2}^{\frac{2 p \theta_1}{p+m_1-1}} \|v\|_{\overline{m}_2}^{1-\theta_2} \left\|\nabla v^{\frac{q+m_2-1}{2}} \right\|_{2}^{\frac{2 \theta_2}{q+m_2-1}},
\end{align}
with the exponents satisfying
\begin{align}
\frac{2 p \theta_1}{p+m_1-1}=\frac{\frac{p}{\overline{m}_1}-\frac{1}{k_1}}{\frac{p+m_1-1}{2 \overline{m}_1}-\frac{d-2}{2d}}, \\
\frac{2 \theta_2}{q+m_2-1}=\frac{\frac{1}{\overline{m}_2}-\frac{1}{k_1'}}{\frac{q+m_2-1}{2 \overline{m}_2}-\frac{d-2}{d}}.
\end{align}
Similarly, for parameters satisfying
\begin{gather}
\frac{1}{k_2}+\frac{1}{k_2'}=1, \nonumber \\
\overline{m}_2<q k_2<\frac{(q+m_2-1)d}{d-2}, \label{qrange}\\
\overline{m}_1<\frac{d}{2}<k_2'<\frac{(p+m_1-1)d}{d-2}, \nonumber
\end{gather}
we apply the Sobolev inequality and  H\"{o}lder's inequality to obtain
\begin{align}\label{vqu}
\int v^q u dx & \le \|v^q\|_{q k_2}^q \|u\|_{k_2'} \nonumber \\
&\le \|v\|_{\overline{m}_2}^{q (1-\alpha)} \|v\|_{\frac{(q+m_2-1)d}{d-2}}^{q \alpha} \|u\|_{\overline{m}_1}^{1-\alpha_0} \|u\|_{\frac{(p+m_1-1)d}{d-2}}^{\alpha_0} \nonumber \\
&\le C(d) \|v\|_{\overline{m}_2}^{q (1-\alpha)} \left\|\nabla v^{\frac{q+m_2-1}{2}} \right\|_{2}^{\frac{2q \alpha}{q+m_2-1}}\|u\|_{\overline{m}_1}^{1-\alpha_0} \left\|\nabla u^{\frac{p+m_1-1}{2}}\right\|_{2}^{\frac{2\alpha_0}{p+m_1-1}},     
\end{align}
where 
\begin{align*}
\alpha=\frac{\frac{1}{\overline{m}_2}-\frac{1}{q k_2}}{\frac{1}{\overline{m}_2}-\frac{d-2}{(q+m_2-1)d}},\quad \alpha_0=\frac{\frac{1}{\overline{m}_1}-\frac{1}{ k_2'}}{\frac{1}{\overline{m}_1}-\frac{d-2}{(p+m_1-1)d}}, 
\end{align*} 
and 
\begin{align*}
\frac{2q \alpha}{q+m_2-1}=\frac{\frac{q}{\overline{m}_2}-\frac{1}{k_2}}{\frac{q+m_2-1}{2 \overline{m}_2}-\frac{d-2}{2d}}, \quad \frac{2 \alpha_0}{p+m_1-1}=\frac{\frac{1}{\overline{m}_1}-\frac{1}{k_2'}}{\frac{p+m_1-1}{2 \overline{m}_1}-\frac{d-2}{2d}}.
\end{align*}
Substituting \eqref{upv} and \eqref{vqu} into \eqref{upvq} results in 
\begin{align}\label{251029}
&\frac{d}{dt} \int u^p dx+\frac{d}{dt} \int v^q dx+\frac{4m_1 p(p-1)}{(p+m_1-1)^2} \int \left| \nabla u^{\frac{p+m_1-1}{2}} \right|^2 dx+\frac{4m_2 q(q-1)}{(q+m_2-1)^2} \int \left| \nabla v^{\frac{q+m_2-1}{2}} \right|^2 dx \nonumber \\
 & \le C\left(\|u\|_{\overline{m}_1}, \|v\|_{\overline{m}_2} \right) 
  \left\| \nabla u^{\frac{p+m_1-1}{2}} \right\|_{2}^{ 
    \frac{\frac{p}{\overline{m}_1} - \frac{1}{k_1}}{\frac{p+m_1-1}{2\overline{m}_1} - \frac{d-2}{2d}}
  } 
  \left\| \nabla v^{\frac{q+m_2-1}{2}} \right\|_{2}^{ 
    \frac{\frac{1}{\overline{m}_2} - \frac{1}{k_1'}}{\frac{q+m_2-1}{2\overline{m}_2} - \frac{d-2}{d}}
  }  \nonumber \\
& \quad + 
 C\left(\|u\|_{\overline{m}_1}, \|v\|_{\overline{m}_2} \right) 
\left\| \nabla v^{\frac{q+m_2-1}{2}} \right\|_{2}^{ 
    \frac{\frac{q}{\overline{m}_2} - \frac{1}{k_2}}{\frac{q+m_2-1}{2\overline{m}_2} - \frac{d-2}{2d}}
  } 
  \left\| \nabla u^{\frac{p+m_1-1}{2}} \right\|_{2}^{ 
    \frac{\frac{1}{\overline{m}_1} - \frac{1}{k_2'}}{\frac{p+m_1-1}{2\overline{m}_1} - \frac{d-2}{2d}}
  }.  
\end{align}
We now further estimate \eqref{251029}. Since $1/m_1+1/m_2=(d+2)/d$, we impose the relation
\begin{align}
\frac{q+m_2-1}{\overline{m}_2}=\frac{p+m_1-1}{2 \overline{m}_1}.
\end{align}
Then the inequalities
\begin{align}\label{pm1}
\frac{\frac{p}{\overline{m}_1}-\frac{1}{k_1}}{\frac{p+m_1-1}{2 \overline{m}_1}-\frac{d-2}{2d}}+\frac{\frac{1}{\overline{m}_2}-\frac{1}{k_1'}}{\frac{q+m_2-1}{2 \overline{m}_2}-\frac{d-2}{d}}<2 
\end{align}
and 
\begin{align}\label{qm2}
\frac{\frac{q}{\overline{m}_2}-\frac{1}{k_2}}{\frac{q+m_2-1}{2 \overline{m}_2}-\frac{d-2}{2d}}+\frac{\frac{1}{\overline{m}_1}-\frac{1}{k_2'}}{\frac{p+m_1-1}{2 \overline{m}_1}-\frac{d-2}{2d}}<2
\end{align}
hold if and only if
\begin{align}
\frac{1}{\overline{m}_1}+\frac{1}{\overline{m}_2}<\frac{m_1}{\overline{m}_1}+\frac{2}{d}, \\
\frac{1}{\overline{m}_1}+\frac{1}{\overline{m}_2}<\frac{m_2}{\overline{m}_2}+\frac{2}{d},
\end{align} 
which are guaranteed by choosing 
\begin{align}
\overline{m}_1<m_1 \text{  and  } \overline{m}_2<m_2.
\end{align}

Armed with the uniform bounds in \eqref{xiaoyum} and H\"{o}lder's inequality, we deduce that
\begin{align*}
\|u\|_{\overline{m}_1}\le \|u\|_1^{1-\theta} \|u\|_{m_1}^\theta \le C, \quad 1-\theta+\frac{\theta}{m_1}=\frac{1}{\overline{m}_1},\\
\|v\|_{\overline{m}_2}\le \|v\|_1^{1-\eta} \|v\|_{m_2}^\eta \le C, \quad 1-\eta+\frac{\eta}{m_2}=\frac{1}{\overline{m}_2},
\end{align*}
where the constants $C$ depend only on $\|U_s\|_{m_1}$, $\|V_s\|_{m_2}$, $\|u_0\|_1$ and $\|v_0\|_1$. Given \eqref{pm1}, we can infer from \eqref{251029} that
\begin{align}\label{upv1}
&C\left(\|u\|_{\overline{m}_1}, \|v\|_{\overline{m}_2} \right) 
  \left\| \nabla u^{\frac{p+m_1-1}{2}} \right\|_{2}^{ 
    \frac{\frac{p}{\overline{m}_1} - \frac{1}{k_1}}{\frac{p+m_1-1}{2\overline{m}_1} - \frac{d-2}{2d}}
  } 
  \left\| \nabla v^{\frac{q+m_2-1}{2}} \right\|_{2}^{ 
    \frac{\frac{1}{\overline{m}_2} - \frac{1}{k_1'}}{\frac{q+m_2-1}{2\overline{m}_2} - \frac{d-2}{d}}
  } \nonumber \\
\le &\frac{m_1 p (p-1)}{(p+m_1-1)^2} \left \| \nabla u^{\frac{p+m_1-1}{2}} \right\|_{2}^{2} + \frac{m_2 q (q-1)}{(q+m_2-1)^2} \left\|\nabla v^{\frac{q+m_2-1}{2}} \right\|_{2}^{2}+C\left(\|u\|_{m_1},\|v\|_{m_2}\right),
\end{align}
where we have used the inequality $x^a y^b \le C_1 x^2+C_2 y^2+C_{3}$ for $a+b<2.$ Similarly, \eqref{qm2} ensures that
\begin{align}\label{vqu1}
& C\left(\|u\|_{\overline{m}_1}, \|v\|_{\overline{m}_2} \right) 
\left\| \nabla v^{\frac{q+m_2-1}{2}} \right\|_{2}^{ 
    \frac{\frac{q}{\overline{m}_2} - \frac{1}{k_2}}{\frac{q+m_2-1}{2\overline{m}_2} - \frac{d-2}{2d}}
  } 
  \left\| \nabla u^{\frac{p+m_1-1}{2}} \right\|_{2}^{ 
    \frac{\frac{1}{\overline{m}_1} - \frac{1}{k_2'}}{\frac{p+m_1-1}{2\overline{m}_1} - \frac{d-2}{2d}}
  } \nonumber \\
\le & \frac{m_1 p (p-1)}{(p+m_1-1)^2} \left \| \nabla u^{\frac{p+m_1-1}{2}} \right\|_{2}^{2} + \frac{m_2 q (q-1)}{(q+m_2-1)^2} \left\|\nabla v^{\frac{q+m_2-1}{2}} \right\|_{2}^{2}+C\left(\|u\|_{m_1},\|v\|_{m_2}\right),
\end{align}
Plugging \eqref{upv1} and \eqref{vqu1} into \eqref{251029}, we have
\begin{align}
&\frac{d}{dt} \int u^p dx+\frac{d}{dt} \int v^q dx+\frac{2m_1 p(p-1)}{(p+m_1-1)^2} \int \left| \nabla u^{\frac{p+m_1-1}{2}} \right|^2 dx \nonumber \\
&+\frac{2m_2 q(q-1)}{(q+m_2-1)^2} \int \left| \nabla v^{\frac{q+m_2-1}{2}} \right|^2 dx \le C. \label{pqtime}
\end{align}
On the other hand, from \eqref{prange}, the Sobolev inequality and Young's inequality give
\begin{align}
\|u\|_p^p \le & \|u \|_1^{p \beta} \left\|u\right\|_{\frac{(m_1+p-1)d}{d-2}}^{p(1-\beta)} \nonumber\\ 
\le & C(d) \left\|u\right\|_1^{p \beta} \left\|\nabla u\right\|_2^{\frac{2p(1-\beta)}{m_1+p-1}} \nonumber \\
\le & \frac{2 m_1 p(p-1)}{(m_1+p-1)^{2}} \int \left|\nabla u^{\frac{m_1+p-1}{2}}\right|^{2} dx +C\left(\|u_0\|_1 \right),                                                     
\end{align} 
where $\beta=\frac{\frac{1}{p}-\frac{d-2}{(m_1+p-1)d}}{1-\frac{d-2}{(m_1+p-1)d}} $, and we have used $\frac{2p(1-\beta)}{m_1+p-1}=\frac{2(p-1)}{p-1+m_1-1+2/d}<2$ for $m_1>1-2/d$. Similarly, using \eqref{qrange}, we arrive at
\begin{align}
\|v\|_q^q \le \frac{2 m_2 q(q-1)}{(m_2+q-1)^{2}} \int \left|\nabla v^{\frac{m_2+q-1}{2}}\right|^{2} dx +C\left(\|v_0\|_1 \right). 
\end{align}
Therefore, it follows from \eqref{pqtime} that
\begin{align}
\frac{d}{dt} \left(\int u^p dx+\int v^q dx\right) +\int u^p dx+\int v^q dx \le C,
\end{align}
which implies that for all $1\le p, q<\infty$,
\begin{align}
\|u(\cdot,t)\|_p \le C,\quad \|v(\cdot,t)\|_q \le C.
\end{align}

{\it\textbf{Step 2}} ($L^\infty$-estimates) \quad We apply the Moser iteration method to establish uniform boundedness of the weak solution, 
\begin{align}\label{Linftime}
\|u(\cdot,t)\|_{L^\infty(\R^d)}+\|v(\cdot,t)\|_{L^\infty(\R^d)} \le  C \left( \|u_0\|_{L^1\cap L^\infty(\R^d)},\|v_0\|_{L^1\cap L^\infty(\R^d)} \right),\quad \text{for any } t>0.
\end{align}
By Lemma \ref{ueps}, there exists a time $T_w>0$ and a solution $(u,v)$ to \eqref{uvsystem} on $[0,T_w)$ with initial data \eqref{initialdata}. The uniform bound \eqref{Linftime} ensures, again by Lemma \ref{ueps}, that the solution can be extended globally in time, i.e., $T_w=\infty$. Hence, we obtain a global weak solution $(u,v)$ to \eqref{uvsystem}. 
\end{proof}

The proof of Theorem \ref{main} is now completed by combining Proposition \ref{blowup} (finite-time blow-up) and Proposition \ref{globalexistence} (global existence).

\section{Conclusions} \label{sec5}

This paper investigates the two-species chemotaxis system \eqref{uvsystem} with diffusion exponents on the critical curve 
\begin{align}\label{m1m2}
\frac{1}{m_1}+\frac{1}{m_2}=\frac{d+2}{d}.
\end{align}
This choice ensures the invariance of the free energy $F(u,v)$ under the scaling \eqref{uvscale}, while preserving the $L^{m_1}$ norm of $u(x,t)$ and the $L^{m_2}$ norm of $v(x,t)$. Moreover, this curve marks the threshold that distinguishes stationary solutions that have compactly support from those that do not. Specifically, the stationary solutions $(U_s,V_s)(x)$ of \eqref{uvsystem} are radially symmetric and non-increasing (up to translation). Our main result establishes that the pair $\left(\|U_s\|_{L^{m_1}(\R^d)},\|V_s\|_{L^{m_2}(\R^d)}\right)$ provides a sharp criterion for initial data, separating solutions that exist for infinite time from those that blow up in finite time. Precisely, under the condition $F(u_0,v_0)<F(U_s,V_s)$, we prove that there exists a global weak solution when the initial data satisfies $\|u_0\|_{L^{m_1}(\R^d)}<\|U_s\|_{L^{m_1}(\R^d)}$ and $\|v_0\|_{L^{m_2}(\R^d)}<\|V_s\|_{L^{m_2}(\R^d)}$, whereas there are blowing-up solutions for $\|u_0\|_{L^{m_1}(\R^d)}>\|U_s\|_{L^{m_1}(\R^d)}$ and $\|v_0\|_{L^{m_2}(\R^d)}>\|V_s\|_{L^{m_2}(\R^d)}$. 

In conclusion, this work establishes a sharp dichotomy for the two-species chemotaxis system at the energy-critical exponent. We have proved that the scaling-invariant norms of the stationary solutions in $L^{m_1}(\R^d) \times L^{m_2}(\R^d)$ provide a critical threshold that sharply separates global existence from finite-time blow-up. This result underscores a fundamental phenomenon: the balance between degenerate diffusion and nonlocal aggregation is precisely quantified by these scaling-invariant norms. The proof relies on a novel combination of free energy dissipation, variational characterization of steady states and a detailed analysis of the Hardy-Littlewood-Sobolev inequality. 

\section*{Data Availability Statement}
All data generated or analysed during this study are included in this published article (and its supplementary information files). All data that support the findings of this study are included within the article (and any supplementary files).

\section*{Statements and Declarations}
This research did not receive any specific grant from funding agencies in the public, commercial, or not-for-profit sectors. The author has no competing interests to declare that are relevant to the content of this article.



\begin{thebibliography}{00}

\bibitem{AY14} F. Arthur, X. Yan and M. Zhao, {\it A Liouville-type theorem for higher order elliptic systems,} Discrete Contin.Dyn. Syst., {\bf 34}(9) (2014), 3317-3339.
    
\bibitem{BL13} S. Bian, J.-G. Liu, {\it Dynamic and steady states for multi-dimensional Keller-Segel model with diffusion exponent $m > 0$,} Communications in Mathematical Physics, 323, 1017-1070 (2013).

\bibitem{BL14} S. Bian, J.-G. Liu and C. Zou, {\it Ultra-contractivity for Keller-Segel model with diffusion exponent $m>1-2/d$,} Kinetic and Related Models, \textbf{7}(1) (2014), 9-28.

\bibitem{BJ09} A. Blanchet, J.A. Carrillo and P. Laurencot, {\it Critical mass for a Patlak-Keller-Segel model with degenerate diffusion in higher dimensions,} Calc. Var. Partial Differential Equations,  {\bf 35} (2009), 133-168.

\bibitem{BCM08} A. Blanchet, J.A. Carrillo and N. Masmoudi, {\it Infinite time aggregation for the critical Patlak-Keller-Segel model in $\R^2$,} Comm. Pure Appl. Math.,  {\bf 61} (2008), 1449-1481.

\bibitem{bdp06} A. Blanchet, J. Dolbeault and B. Perthame, {\it Two-dimensional Keller-Segel model: optimal critical mass and qualitative properties of the solutions,} Eletron. J. Differential Equations, {\bf 44} (2006), 1-33.

\bibitem{CCH17} V. Calvez, J.A. Carrillo and F. Hoffmann, {\it Equilibria of homogeneous functionals in the fair-competition regime,} Nonlinear Anal., {\bf 159} (2017), 85-128.

\bibitem{CCH21} V. Calvez, J.A. Carrillo and F. Hoffmann, {\it Uniqueness of stationary states for singular Keller-Segel type models,} Nonlinear Anal., {\bf 205} (2021), 1-24.

\bibitem{CCV15} J.A. Carrillo, D. Castorina and B. Volzone, {\it Ground states for diffusion dominated free energies with logarithmic interaction,} SIAM J. Math. Anal., {\bf 47}(1) (2015), 1-25.

\bibitem{CHVY19} J.A. Carrillo, S. Hittmeir, B. Volzone and Y. Yao, {\it Nonlinear aggregation-diffusion equations: radial symmetry and long time asymptotics,} Invent. Math., {\bf 218}(3) (2019), 889-977.

\bibitem{CHM18} J.A. Carrillo, F. Hoffmann, E. Mainini and B. Volzone, {\it Ground states in the diffusion-dominated regime,} Calc. Var. Partial Differential Equations, {\bf 57}(5) (2018), No. 127, 28 pp.
    
\bibitem{JK22} J.A. Carrillo, K. Lin, {\it Sharp conditions on global existence and blow-up in a degenerate two-species and cross-attraction system,} Adv. Nonlinear Anal.,  {\bf 11}(1) (2022), 1-39.

\bibitem{CGH20} H. Chan, M.D.M. Gonz\'{a}lez, Y. Huang, E. Mainini and B. Volzone, {\it Uniqueness of entire ground states for the fractional plasma problem,} Calc. Var. Partial Differential Equations, {\bf 59} (2020), 1-42.
    
\bibitem{CLAA05} W. Chen, C. Li, {\it Regularity of solutions for a system of integral equations,} Comm. Pure Appl. Anal., {\bf 4} (2005), 1-8.

\bibitem{CLL17} W. Chen, C. Li and Y. Li, {\it A direct method of moving planes for the fractional Laplacian,} Adv. Math., {\bf 308} (2017), 404-437.

\bibitem{CLPDE05} W. Chen, C. Li and B. Ou, {\it Classification of solutions for a system of integral equations,} Commun. Partial Differ. Equ., {\bf 30} (2005), 59-65.

\bibitem{CLZ17} W. Chen, C. Li and R. Zhang, {\it A direct method of moving spheres on fractional order equations,} J. Funct. Anal., {\bf 272} (2017), 4131-4157.

\bibitem{DQ22} W. Dai, G. Qin, {\it Liouville type theorems for fractional and higher order Hénon-Hardy type equations via the method of scaling spheres,}  Int. Math. Res. Not., {\bf 11} (2023), 9001-9070.    

\bibitem{DYY22} M. G. Delgadino, X. Yan and Y. Yao, {\it Uniqueness and nonuniqueness of steady states of aggregation-diffusion equations,} Comm. Pure Appl. Math.,  {\bf 1} (2022), 3-59.

\bibitem{HW22} L. Hong, J.H. Wang and W. Wang, {\it Critical curve for a two-species chemotaxis model with two chemicals in $\R^2$,} Nonlinearity, {\bf 35}(10) (2022), 5530-5552. 
    
\bibitem{JL92} W. J\"{a}ger, S. Luckhaus, {\it On explosions of solutions to a system of partial differential equations modeling chemotaxis,} Trans. Amer. Math. Soc., {\bf 239}(2) (1992), 819-821.
    
\bibitem{JL14} T. Jin, Y. Li and J. Xiong, {\it On a fractional Nirenberg problem, part I: blow up analysis and compactness of solutions,} J. Eur. Math. Soc., {\bf 16} (2014), 1111-1171.

\bibitem{DG22} D. Karmakar, G. Wolansky, {\it On the critical mass Patlak-Keller-Segel system for multi-species populations: global existence and infinite time aggregation,} Indiana Univ. Math. J., {\bf 71}(6) (2022), 2477-2527.

\bibitem{DG19} D. Karmakar, G. Wolansky, {\it On Patlak-Keller-Segel system for several populations: a gradient flow approach,} J. Differential Equations, {\bf 267}(12) (2019), 7483-7520.

\bibitem{kimyao} I. Kim, Y. Yao, {\it The Patlak-Keller-Segel model and its variations: properties of solutions via maximum principle,}
SIAM J. Math. Anal., {\bf 44} (2012), 568-602.

\bibitem{lieb83} E.H. Lieb, {\it Sharp constants in the Hardy-Littlewood-Sobolev and related inequalities,} Ann. Math., {\bf 118}(2) (1983), 349-374.

\bibitem{lieb202} E.H. Lieb, M. Loss, {\it Analysis}, Graduate Studies in Mathematics. V. 14, American
Mathematical Society Providence, Rhode Island, 2nd edition, 2001.

\bibitem{EM96} E. Mitidieri, {\it Nonexistence of positive solutions of semilinear elliptic systems in $\R^n$,} Diff. Int. Equs., {\bf 9}(3) (1996), 465-479.

\bibitem{EM01} E. Mitidieri, S.I. Pohozaev, {\it A priori estimates and the absence of solutions of nonlinear
partial differential equations and inequalities,} Tr, Mat. Inst. Steklova, {\bf 234} (2001), 1-384.

\bibitem{bp07} B. Perthame, {\it Transport Equations in Biology,} Birkhaeuser Verlag, Basel-Boston-Berlin, 2007.

\bibitem{PS12} Q.H. Phan, P. Souplet, {\it Liouville-type theorems and bounds of solutions of Hardy-Hénon
equations,} J. Differential Equations, {\bf 252}(3) (2012), 2544-2562.

\bibitem{PQ07} P. Polácik, P. Quittner and P. Souplet, {\it Singularity and decay estimates in superlinear problems via Liouville-type theorems. I: Elliptic equations and systems,} Duke Math. J., {\bf 139}(3) (2007), 555-579.

\bibitem{SZ98} J. Serrin, H. Zou, {\it Existence of positive solutions of the Lane-Emden system,} Atti Sem. Mat. Fis. Univ. Modena, {\bf 46} (1998), 369-380.

\bibitem{SZ96} J. Serrin, H. Zou, {\it Singularity and decay estimates in superlinear problems via Liouville-type theorems. I: Elliptic equations and systems,} Diff. Int. Equs, {\bf 9} (1996), 635-653.

\bibitem{S09} P. Souplet, {\it The proof of the Lane-Emden conjecture in four space dimensions,} Adv. Math., {\bf 221} (2009), 1409-1427.

\bibitem{su06} Y. Sugiyama, {\it Global existence in sub-critical cases
and finite time blow-up in super-critical cases to degenerate
keller-segel systems,} Diff. Int. Eqns., {\bf 19} (2006), 841-876.

\bibitem{suku06} Y. Sugiyama and H. Kunii, {\it Global existence and decay properties for a degenerate keller-segel model with
 a power factor in drift term,} J. Differential Equations, {\bf 227} (2006), 333-364.

\end{thebibliography}
\end{document}